\documentclass{article}
\usepackage{graphicx,amsmath, amssymb, amsthm,url,comment} 
\usepackage[margin=1in]{geometry}
\newtheorem{thm}{Theorem}
\newtheorem{defn}{Definition}
\newtheorem{lemma}{Lemma}
\newtheorem{cor}{Corollary}
\usepackage{graphicx} 
\newcommand{\A}{{ \mathcal{A} }}
\newcommand{\BM}{{ \mathfrak{A} }}
\newcommand{\Adj}{\{ A_0, \dots, A_d \}}

\newcommand{\rl}[1]{{ \overset{#1}{\sim} }}
\newcommand{\Rel}{ \mathcal{R} }

\title{Antipodal SRG covers}
\author{Patrick Cesarz \\ Department of Mathematics \\ Alvernia University \\ 400 St. Bernardine Street \\ Reading, PA 19607 \\ email: patrick.cesarz@gmail.com \\ \\ Jason Williford \\ Department of Mathematics and Statistics \\ University of Wyoming \\ 1000 E. University Ave. \\ Laramie, WY 82071 \\ email: jwillif1@uwyo.edu}

\begin{document}

\title{Four class association scheme double covers of strongly regular graphs}

\maketitle

\begin{abstract}
    In this paper we give necessary conditions for 4-class association schemes that are generated by double covers of strongly regular graphs. These conditions are applied to open cases for diameter 4 antipodal distance-regular graphs. Using these conditions we are able to show the nonexistence of four cases in the table of Brouwer, Cohen and Neumeier: $\{20,18,3,1;1,3,18,20\}$, $\{22,21,3,1;1,3,21,22\}$, $\{ 54,50,5,1;,1,5,50,54 \}$, $\{ 170,162,9,1;,1,9,162,170 \}$.
    More generally, we show there is no distance-regular graph with intersection array $\{k,b_1,b_2,1;1,b_2,b_1,k \}$ where $b_2 \neq 1$ and $\frac{ k b_1 }{4}(1+k+\frac{ k b_1 }{2b_2})$ is odd. Tables are also given for more general 4-class association schemes generated by double covers of strongly regular graphs. 
\end{abstract}

\section{Introduction}

The notion of a double cover is very useful in many areas of mathematics. Indeed, in algebraic topology and algebraic geometry their use goes back to the 19th century. In combinatorics, double covers of graphs have had many applications in recent years. These are often in the form of a signing of the edges of a known graph with plus/minus signs. For example, in \cite{ramanujan} the authors use successive double covers to construct bipartite Ramanujan graphs of all possible valencies. In 2019, Huang \cite{sensitivity} proved the sensitivity conjecture using a signing of the edges of a hypercube. This signing describes the unique $C_4$-free double cover of the cube, discovered by Cohen and Tits \cite{ncube}. 

Double covers are important in association schemes, since they arise  as the graphs of  imprimitive schemes with a perfect matching as a relation. For example, double covers of certain strongly regular graphs and symplectic dual polar space graphs were used to make new association schemes with the $Q$-polynomial property (see \cite{PW}, \cite{MW}).

In finite geometry, Moorhouse \cite{M1} used double covers of semibiplanes to construct new projective planes by associating these covers with a linear system over $\mathbb{F}_2$. This linear system is the inspiration for the present work. In this paper, we will to derive new conditions of 4-class association schemes that are generated by double covers of strongly regular graphs. These will be applied to hypothetical antipodal distance-regular graphs of diameter 4. 

%













\section{ Graph Double Covers }

Let $\Gamma$ be a graph with a vertex partition $\pi$ into independent sets of size 2, which we shall refer to as \textit{fibers}. Suppose further that any two fibers either have no edges between them or a perfect matching. Define $\Gamma/\pi$ to be the \textit{quotient graph} with respect to this partition, i.e. the graph with fibers as vertices, with an edge between fibers precisely when they are joined by a matching in $\Gamma$. We say that $\Gamma$ is a \textit{double cover} of the graph $\Gamma/\pi$. 


While it is convenient to define the quotient graph as derived above from the double cover, one is often more interested in the inverse problem: given a graph, construct a double cover with certain properties. From this point onward, we will use the notation $\Gamma$ for our graph and $\hat{\Gamma}$ for a putative double cover of $\Gamma$. For each vertex $x$ of $\hat{\Gamma}$, the other vertex $y$ such that $\{x,y\}$ is a fiber is called the \textit{antipode} of $x$. 





Given a graph, it is natural to ask how one would find all double covers of this graph.  
A standard way of encoding a double cover of a graph $\Gamma$ is by signing all of the edges of $\Gamma$ with $+,-$ signs. We will denote the signing function by $\sigma : E \to \{-,+\}$, and refer to the pair $\Gamma,\sigma$ as a signed graph. Any such signing gives a double cover with the vertex set $\hat{V} = \{ (x,i) : x \in V, i \in \{1,2\} \}$, with $(x,i) \sim (y,i)$ iff $x \sim y$ with $\sigma(\{x,y\})=+$, and $(x,i) \sim (y,j)$ with $i \neq j$ iff $x \sim y$ with the edge $\sigma(\{x,y\})=-$. Conversely, any double cover can be described (up to isomorphism) by such a signing of the edges of $\Gamma$. 

The signing referred to above is not unique, different signings of the graph may or may not give isomorphic double covers. For example, picking a vertex $x$ and reversing all the signs of the edges incident with $x$ always yields an isomorphic double cover. On the other hand, for most graphs there are many signings of a graph which yield non-isomorphic graphs. For example, signing the edges of $\Gamma$ with all $+$ yields two disconnected copies of $\Gamma$, and signing with all $-$ signs yields a bipartite graph, and these are usually not isomorphic (for example, if $\Gamma$ is connected and not bipartite this is clear).

Given a signing $\sigma$ of a graph $\Gamma$ and its corresponding cover $\hat{\Gamma}$ of $\Gamma$, we define $\check{\Gamma}$ to be the double cover of $\Gamma$ obtained by reversing the signs on all the edges of $\Gamma$ (note that $\hat{\Gamma}$ and $\check{\Gamma}$ have the same vertex set $\hat{V}$).  We define $\overline{\Gamma}$ to be the graph with the same vertex set as $\hat{\Gamma}$, where $(x,i) \sim (y,j)$ if and only if $x \neq y$ and $x \nsim y$ in $\Gamma$. Lastly, we define $\Gamma^-$ to be the graph with vertex set $\hat{V}$, where $(x,i) \sim (y,j)$ if and only if $x=y, i \neq j$. Together, the graphs $\hat{\Gamma}, \overline{\Gamma}, \check{\Gamma}, \Gamma^-$ form an edge partition of the complete graph on $\hat{V}$.   

In this paper we will be using the graphs $\hat{\Gamma}, \overline{\Gamma}, \check{\Gamma}, \Gamma^-$ to define an association scheme. The next section outlines a simple condition which we will use to give feasibility criteria for such schemes. 

\subsection{A feasibility condition}


Let $\Gamma$ be a graph with vertices $V$ and edges $E$.  In the following, we will consider subsets of edges of $\Gamma$. For a signed graph $\Gamma,\sigma$, and any set of edges $b$ of $\Gamma$, we define the \textit{weight} of $b$ to be the number of minus signs in $b$. 

The following Lemma is like the pigeonhole principle, in that it is quite simple, but has subtle applications: 

\begin{lemma}\label{pigeon}
Let $\Gamma,\sigma$ be a signed graph, and $X$ a set whose elements are sets of edges of $\Gamma$, where $|X|$ is odd, and each edge of $\Gamma$ is in an even number of elements of $X$. Then some element of $X$ has even weight. 
\end{lemma}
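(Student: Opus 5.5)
The plan is a parity double count. We sum the weights of all elements of $X$ and show the total is even. Since there are an odd number of terms, they cannot all be odd, so some element of $X$ has even weight.

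Concretely, I would write $w(b)$ for the weight of $b \in X$. Let $E^-$ be the set of edges $e$ with $\sigma(e) = -$. Then
\[
\sum_{b \in X} w(b) \;=\; \sum_{b \in X} \sum_{e \in b} [e \in E^-] \;=\; \sum_{e \in E^-} \bigl|\{ b \in X : e \in b\}\bigr| .
\]
The second equality comes from exchanging the order of summation, i.e. counting the pairs $(b,e)$ with $e \in b$ and $e$ negatively signed. By hypothesis, each edge lies in an even number of elements of $X$. So every term on the right is even, and the total is $\equiv 0 \pmod 2$.

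Now suppose, for contradiction, that every $b \in X$ had odd weight. Then the left side would be a sum of $|X|$ odd numbers. Since $|X|$ is odd, that sum is odd. This contradicts the evenness just established, so some element of $X$ has even weight.

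There is no serious obstacle. The only point needing care is that $X$ is a set of edge sets, so each $b$ counts once, and that "in an even number of elements" counts multiplicity in $X$. Both are consistent with the double count above, and $X$ is implicitly finite since $|X|$ is odd.
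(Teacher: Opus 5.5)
Your proposal is correct and takes the same approach as the paper: the total weight $\sum_{b \in X} w(b)$ is even because every edge lies in an even number of elements of $X$, and an odd number of odd weights would give an odd total. Your explicit exchange of summation over negatively signed edges makes precise the step the paper states in one line.
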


\begin{proof} Since every edge is in an even number of elements of $X$, $\sum\limits_{b \in X} w(b)$ must be even. If every element of $b$ had odd weight, $\sum\limits_{b \in X} w(b)$ would be odd, a contradiction. \end{proof}

The application we explore will be for hypothetical association schemes.  For this application, we will consider certain special sets of edges. Given vertices $x,$ with $x\neq y$, we define the \textit{bundle} $B(x,y)$ to be the set of edges $\{ \{w,z\} : w \in \{x,y\}, z \in V, x \sim z \sim y \}$. Simply put, $B(x,y)$ is the set of edges in the bipartite graph between $x,y$ and their common neighbors. Note $B(x,y) = B(y,x)$. We also define the \textit{augmented bundle} $B^+(x,y)$ int he following manner: for adjacent $x,y$, we have $B^+(x,y) = B(x,y) \cup \{\{x,y\}\}$, and for nonadjacent $x,y$ we have $B^+(x,y)= B(x,y)$. Given a signing $\sigma$ of the graph $\Gamma$ we define the \textit{weight} of the bundle or augmented bundle $b$ to be the number of minus signs on the edges in $b$. We will denote the weight of $b$ by $w(b)$. We refer to a pair $\{x,y\}$ where $x \neq y$ and $x \nsim y$ as a \textit{nonedge} of $\Gamma$. 

The theorem rephrased in terms of these special subsets is below. 

\begin{lemma}\label{bundle}
Let $\Gamma,\sigma$ be a signed graph, and $X$ a set whose elements are bundles or augmented bundles of $\Gamma$, where $|X|$ is odd, and each edge of $\Gamma$ is in an even number of elements of $X$. Then some element of $X$ has even weight. 
\end{lemma}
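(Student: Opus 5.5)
This is a direct specialization of Lemma~\ref{pigeon}, so I will simply verify that its hypotheses hold and read off the conclusion. Each bundle $B(x,y)$ is, by definition, a set of edges of $\Gamma$: it consists of edges $\{w,z\}$ with $w\in\{x,y\}$ and $z$ a common neighbour of $x$ and $y$, and these are genuine edges because $z\sim x$ and $z\sim y$. Each augmented bundle $B^+(x,y)$ is either $B(x,y)$ or $B(x,y)\cup\{\{x,y\}\}$, where in the second case $x\sim y$. In either case $B^+(x,y)\subseteq E$. Hence $X$ is a set whose elements are sets of edges of $\Gamma$.

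The remaining hypotheses carry over verbatim. $|X|$ is odd, and every edge lies in an even number of elements of $X$. The notion of weight for bundles and augmented bundles was defined as the number of minus signs on their edges, which is exactly the weight used in Lemma~\ref{pigeon}. Lemma~\ref{pigeon} therefore applies directly and gives an element of $X$ of even weight.

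The one subtlety is that $X$ is a \emph{set}. If a bundle coincides as an edge set with some augmented bundle, or if $B(x,y)$ coincides with $B(x',y')$ for a different pair, it is counted once. Incidence counts are then taken over the distinct edge sets, so this is consistent with Lemma~\ref{pigeon} and needs no separate treatment. For the same reason I do not expect any real obstacle. If one instead wanted to allow multisets, the parity argument of Lemma~\ref{pigeon} goes through unchanged, because $\sum_{b\in X} w(b)$ counts each minus edge once for each element of $X$ containing it.
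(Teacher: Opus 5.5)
Your proposal is correct and follows the paper's approach: the paper gives no separate proof of Lemma~\ref{bundle} and presents it as Lemma~\ref{pigeon} rephrased for bundles and augmented bundles. Checking that these are edge sets of $\Gamma$, as you do, is all the reduction needs, and your remark that the parity count also works for multisets is accurate.
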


In the next section we will introduce the needed definitions for association schemes, and we will apply the above theorem to show nonexistence of some hypothetical schemes. 

\subsection{ Association Schemes}

We will be studying association schemes which are, in some sense, generated by a double cover of a graph. 
We begin with the basic definition of association schemes, for more details see \cite{BCN} and \cite{BI}.  
Some sources have a more general definition of association scheme, our definition of association scheme corresponds  to a \textit{symmetric association scheme} in those sources. 

\begin{defn} An association scheme is a set together with a collection of relations $\Rel = \{ R_0, \dots, R_d \}$ on $X$ such that:

\begin{enumerate}
\item $\Rel$ is a partition of $X \times X$
\item $R_0$ is the identity relation on $X$.
\item $R_i = R_i^{\top}$ for $0 \leq i \leq d$
\item there are constants $p_{ij}^k$ such that for any pair $(x,y) \in R_k$ the number of $z \in X$ such that $(x,z) \in R_i$ and $(z,y) \in R_j$ equals $p_{ij}^k$.
\end{enumerate}

\end{defn}

The relation $R_0$ is often referred to as the trivial relation. 
Since the nontrivial relations are symmetric and anti-reflexive, they can be viewed as graphs (see \cite{BCN} for more on graphs). We will henceforth refer to them as graphs instead of relations. When giving the defining graphs of a scheme, we will also omit the trivial relation $R_0$ from the list.

By property $1$ in the above, every pair of vertices is in exactly one of the relations. We write $x \rl{i} y$ when $(x,y) \in R_i$. We now define the adjacency matrices of an association scheme.

\begin{defn} Let $\A = \Adj$ be $(n \times n)$ matrices indexed by $X$ with entries defined by 
$(A_i)_{xy} = 1$ if $x \rl{i} y$ and $(A_i)_{xy} = 0$ otherwise.
\end{defn}

Note that the $A_i$ are all symmetric and linearly independent, and, with the exception of $A_0$, each $A_i$ may be viewed as the adjacency matrix of the corresponding graph.
These matrices satisfy the following:

\begin{enumerate}  

\item $\sum_i A_i = J$
\item $A_0 = I$
\item $A_i = A_i^T$
\item $A_iA_j = \sum_{k} p_{ij}^{k} A_k$
\end{enumerate}

From these properties, it is easily verified that $\BM =  span< A_0, \dots, A_d >$ is a $d+1$ dimensional commutative matrix algebra.  This algebra has the unusual property that it is closed under two distinct multiplications, namely matrix multiplication and the Schur (entrywise) product, and is called the \textit{Bose-Mesner algebra} of the scheme. 

This algebra has another basis of primitive idempotents $E_0, E_1, \dots E_d$, where $E_0 = \frac{1}{|X|}J$. 
There are many uses for the idempotents of a scheme, for more information see \cite{BCN}, chapter 2. 

\subsection{Parameters of association schemes}

The Bose-Mesner algebra $\BM$ has several useful parameters associated with it. In particular, the parameters $p_{ij}^k$ will play an important role. For completeness, we will also include other parameters of the scheme, so we will breifly describe their meaning. 

The left-regular action of the basis $A_0, \dots, A_d$ on itself via matrix multiplication yields a collection of $(d+1) \times (d+1)$ matrices $L_0, \dots, L_d$, defined by $(L_i)_{kj} = p^k_{ij}$, with the property that $L_iL_j = \sum_{k} p_{ij}^{k} L_k$.  These matrices are convenient ways to store the $p_{ij}^k$, since they also share eigenvalues with the corresponding $A_i$. We will refer to the as the intersection matrices of the scheme. 

We define the matrix $P$ to be the $(d+1) \times (d+1)$ matrix whose entries satisfy $A_j E_i = P_{ij} E_i$.  In particular, $P_{ij}$ is precisely the eigenvalue of $A_j$ on the $j$th common eigenspace of $\BM$. We define the matrix $Q$ by $Q=|X| P^{-1}$.  It is well known that the first row of $Q$ gives the dimensions of the common eigenspaces of $\BM$, thus are positive integers which we call the multiplicities of the scheme. 

We will also describe Krein parameters of schemes $q_{ij}^k$.  These are given for completeness, however we will not use them explicitly in the present work, and so omit the definition of them and the dual intersection matrices $L_i^*$, which can be found in chapter 2 of \cite{BCN}.  We note that all the parameters $P_{ij}, Q_{ij}, q_{ij}^k$ can be derived from the $p_{ij}^k$, see \cite{BCN} chapter 2 for details on these calculations. 


The simplest interesting examples of association schemes are given by \textit{strongly regular graphs}. A graph $\Gamma$ is called strongly regular with parameters $(v,k,\lambda,\mu)$ provided that $\Gamma$ is not complete or empty, has $v$ vertices, is $k$-regular, and every adjacent pair of vertices in $\Gamma$ have $\lambda$ common neighbors, and every two distinct non-adjacent vertices have $\mu$ common neighbors. A strongly regular graph $\Gamma$ and its complement form the nontrivial relations of an association scheme with $p_{11}^0=k$, $p_{11}^1=\lambda$ and $p_{11}^2 = \mu$. We call the strongly regular graph $\Gamma$ primitive provided that both $\Gamma$ and its complement are connected graphs. Using the relations of \cite{BCN}, chapter 2 Lemmas 2.1.1 through 2.2.1, all other $p_{ij}^k$ and the matrices $P$ and $Q$ can be computed.  We give $L_1, L_2$ and $P$ below: 

$$L_1 = \begin{pmatrix}
0&k&0\\
1&\lambda&k-\lambda-1 \\
0&\mu&k-\mu \\
\end{pmatrix}
L_2 = \begin{pmatrix}
0&0&v-k-1\\
0&k-\lambda-1&v-2k+\lambda \\
1&k-\mu&v-2k+\mu-2 \\
\end{pmatrix}
P = \begin{pmatrix}
1&k&v-k-1\\
1&r&-r-1 \\
1&s&-s-1 \\
\end{pmatrix}$$

where $r,s = \frac{\lambda-\mu \pm \sqrt{ (\lambda-\mu)^2 + 4(k-\mu)}}{2}$ are the eigenvalues of the SRG, respectively. 

Primitive strongly regular graphs are well-studied, for example see the recent book of Brouwer and Van Maldegham $\cite{SRG}$. We consider primitive strongly regular graphs $\Gamma$ which have double covers $\hat{\Gamma}$, such that $\hat{\Gamma}, \overline{ {\Gamma}}, \check{\Gamma}$ and $\Gamma^-$ form the graphs of a four-class association scheme. We will henceforth call this a 4-class double cover of the graph $\Gamma$. 

Let $\Gamma$ be a primitive $(v,k,\lambda,\mu)$ strongly regular graph with a 4-class double cover. The parameters of the double cover are given below.  We note that the parameters of the 4-class scheme are entirely determined by $v,k,\lambda,\mu$ and the additional parameter $p_{11}^1$. We will label $\lambda_1= p_{11}^1$, and define $\lambda_2 = p_{11}^3$, and note that $\lambda = \lambda_1+\lambda_2$.  The parameters of the scheme and its $P$-matrix are given below.  

\bigskip

$L_1 = \begin{pmatrix} 0&k&0&0&0 \\
1&\lambda_1&k-\lambda-1&\lambda_2&0 \\
0&\frac{\mu}{2}&k-\mu&\frac{\mu}{2}&0 \\
0&\lambda_2&k-\lambda-1&\lambda_1&1 \\
0&0&0&k&0 
\end{pmatrix} \, \, \, \,  L_2 = \begin{pmatrix}
0&0&2(v-k-1)&0&0 \\
0&k-\lambda-1&2(v-2k+\lambda)&k-\lambda-1&0 \\
1&k-\mu&2(v-2k+\mu-2)&k-\mu&1 \\
0&k-\lambda-1&2(v-2k+\lambda)&k-\lambda-1&0 \\
0&0&2(v-k-1)&0&0 \\
\end{pmatrix}$

\bigskip

$L_3 = \begin{pmatrix}
0&0&0&k&0 \\
0&\lambda_2&k-\lambda-1&\lambda_1&1 \\
0&\frac{\mu}{2}&k-\mu&\frac{\mu}{2}&0 \\
1&\lambda_1&k-\lambda-1&\lambda_2&0 \\
0&k&0&0&0 \\
\end{pmatrix} \, \, \, \, L_4 = \begin{pmatrix}
0&0&0&0&1 \\
0&0&0&1&0 \\
0&0&1&0&0 \\
0&1&0&0&0 \\
1&0&0&0&0 \\
\end{pmatrix}$

\bigskip

$P =\begin{pmatrix}
1&k&2(v-k-1)&k&1 \\
1&\frac{( \lambda_1 - \lambda_2 )+\sqrt{ ( \lambda_1 - \lambda_2 )^2 +4k }}{2}&0&\frac{( \lambda_2 - \lambda_1 )-\sqrt{ ( \lambda_1 - \lambda_2 )^2 +4k }}{2}&-1 \\
1&r&-2(r+1)&r&1 \\
1&\frac{( \lambda_1 - \lambda_2 )-\sqrt{ ( \lambda_1 - \lambda_2 )^2 +4k }}{2}&0&\frac{( \lambda_2 - \lambda_1 )+\sqrt{ ( \lambda_1 - \lambda_2 )^2 +4k }}{2}&-1 \\
1&s&-2(s+1)&s&1 \\
\end{pmatrix}$

We note that all the graphs $\overline{ {\Gamma}}, \check{\Gamma}$ and $\Gamma^-$ are determined by the double cover $\hat{\Gamma}$. Thus checking whether these graphs form a 4-class scheme can be reduced to conditions on $\hat{\Gamma}$. We summarize these conditions below in the following theorem: 

\begin{thm}
    Let $\Gamma$ be a $(v,k,\lambda,\mu)$ SRG, and let $\hat{\Gamma}$ be a double cover of $\Gamma$. Then the graphs $\hat{\Gamma}, \overline{ {\Gamma}}, \check{\Gamma}$ and $\Gamma^-$ form the graphs of a 4-class association scheme if and only if the parameters $p_{11}^1$, $p_{11}^2$ are well-defined. That is, if and only if every adjacent pair of vertices in $\hat{\Gamma}$ has the same number of common neighbors, and every pair of non-adjacent vertices in $\hat{ {\Gamma}}$ whose images are distinct and non-adjacent in $\Gamma$ has the same number of common neighbors. 
\end{thm}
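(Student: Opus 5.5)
The forward direction is immediate: if the four graphs form a 4-class scheme with $\hat{\Gamma}$ as relation $R_1$, then $p_{11}^1$ and $p_{11}^2$ are constants by definition. For the converse, I would use the standard fact that a partition of $X\times X$ into symmetric relations containing the identity is an association scheme iff the span $\BM$ of the adjacency matrices is closed under matrix multiplication. It then suffices to show that each product $A_iA_j$ lies in $\BM$. Here $A_1,A_2,A_3,A_4$ denote the adjacency matrices of $\hat{\Gamma},\overline{\Gamma},\check{\Gamma},\Gamma^-$.

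\textbf{Step 1: express everything through $A_1$ and $A_4$.} Write $N$ for the adjacency matrix of $\Gamma$ and $M=A_1-A_3$, the signed matrix lifted to $\hat V$. Then $A_4 = I_v\otimes (J_2-I_2)$ is the fiber involution, and $A_4$ is a permutation matrix with $A_4^2=I$. We have $A_3=A_4A_1=A_1A_4$, since swapping one endpoint to its antipode exchanges $\hat\Gamma$ and $\check\Gamma$ edges. Also $A_1+A_3 = N\otimes J_2$ and $A_2=J-I-A_1-A_3-A_4$. Consequently every product involving $A_4$ is trivially in $\BM$, and so is every product involving $J$ or $I$. Since $A_2$ is a linear combination of $J,I,A_1,A_3,A_4$, and $A_3=A_4A_1$, everything reduces to showing $A_1^2\in\BM$.

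\textbf{Step 2: compute $A_1^2$ entrywise.} For a pair $(x,y)$ in each relation, I would evaluate $(A_1^2)_{xy}$, the number of common $\hat\Gamma$-neighbours.
\begin{itemize}
\item For $x=y$ it equals $k$.
\item For $y$ the antipode of $x$ it equals $0$, since each fiber pair meets a neighbouring fiber via a matching.
\item For $x\sim y$ in $\hat\Gamma$ it equals $p_{11}^1$ by hypothesis.
\item For $x\rl{3} y$, i.e. $y$ is the antipode of a $\hat\Gamma$-neighbour $y'$ of $x$, the common neighbours of $x$ and $y$ correspond under the antipode map to the $\check\Gamma$-common-neighbours of $x$ and $y'$. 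Common neighbours of the images in $\Gamma$ number $\lambda$, and each lifts to exactly one $z$ adjacent to $x$ in $\hat\Gamma$, which is adjacent either to $y'$ or to $y$. So the count is $\lambda-p_{11}^1$, again constant.
\item For $x\rl{2} y$, the value is $p_{11}^2$ by hypothesis.
\end{itemize}
Hence $A_1^2 = kI + p_{11}^1A_1 + (\lambda-p_{11}^1)A_3 + p_{11}^2A_2\in\BM$.

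\textbf{Step 3: close up.} By Step 1, $\BM$ is now closed under multiplication. Symmetry of the $A_i$ together with $\BM$ being generated by the commuting matrices $A_1$ and $A_4$ gives commutativity, so the relations form a (symmetric) 4-class association scheme.

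The one point needing care is the $R_3$ case in Step 2: one must check that the lifted common neighbours of $x$ and $y$ split exactly between $y'$ and its antipode with total $\lambda$. That uses only that $\Gamma$ is an SRG and that fibers are joined by perfect matchings, so I expect no real obstacle. The rest is bookkeeping with the identities $A_3=A_4A_1$ and $A_2=J-I-A_1-A_3-A_4$.
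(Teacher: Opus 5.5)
Your proof is correct, but it takes a genuinely different route from the paper's.

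\textbf{The paper's route.} It works entirely at the level of intersection numbers. It uses the antipodal relabelling $g=(04)(13)$, namely $x \rl{i} y$ iff $x \rl{i^g} y'$, to transport well-definedness from one $p_{ij}^k$ to others. It uses the row-sum identity $\sum_j p_{ij}^k=v_i$ to recover a last missing entry. It also makes a few direct counts against the SRG parameters: $p_{12}^1=r_{12}^1$, $p_{22}^2=2r_{22}^2$ and $p_{22}^1=p_{22}^3=2r_{22}^1$. The cases are handled one at a time.

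\textbf{Your route.} You pass instead to the span $\BM$. The identity $A_4A_i=A_{i^g}$ is what $A_4^2=I$, $A_3=A_4A_1=A_1A_4$ and $A_4A_2=A_2$ together say; it is the matrix form of the paper's relabelling by $g$. Likewise, $J\in\BM$ together with regularity of all four graphs is the matrix form of the row-sum identity. These observations collapse the whole verification to the single relation
\[
A_1^2=kI+p_{11}^1A_1+p_{11}^2A_2+(\lambda-p_{11}^1)A_3 .
\]

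\textbf{Comparison.} Your approach is more economical. It also shows exactly what is used about $\Gamma$: regularity and constant $\lambda$. Constancy of $\mu$, with $p_{11}^2=\mu/2$, is forced by the hypothesis. The paper's approach has the side benefit of exhibiting several intersection numbers explicitly in SRG terms. You recover these too, by expanding $A_1A_2$, $A_2^2$, and so on.

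\textbf{Two small remarks.}

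First, in your $R_3$ bullet the opening sentence is not right as stated. Applying the antipode map to the common $\hat\Gamma$-neighbours of $x$ and $y$ gives the vertices $\check\Gamma$-adjacent to $x$ and $\hat\Gamma$-adjacent to $y'$. It does not give the $\check\Gamma$-common neighbours of $x$ and $y'$; those number $p_{33}^1=p_{11}^1$, not $\lambda-p_{11}^1$. However, the lifting argument you give immediately afterwards is correct, and it is all that step needs.

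Second, in Step 3, commutativity is not part of the paper's definition. In any case it follows from symmetry alone: for $A,B\in\BM$ the product $AB\in\BM$ is symmetric, so $AB=(AB)^{\top}=BA$. The claim that $A_1$ and $A_4$ generate $\BM$ is therefore unnecessary. It also fails when $\mu=0$, since then $p_{11}^2=0$ and $J$ must be added as a generator.
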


\begin{proof}
    Since a strongly regular graph together with its complement gives an association scheme in its own right, it will be convenient to refer to the parameters of the scheme of the SRG by $r_{ij}^k$. Thus $r_{11}^1 = \lambda, r_{11}^2 = \mu$, while saving the notation $p_{ij}^k$ for the parameters of the putative four class scheme. 
    
    For a vertex $x$ of $\hat{\Gamma}$, let $x'$ denote its antipode, and note $x''=x$.  
    Let $\hat{\Gamma}, \overline{ {\Gamma}}, \check{\Gamma}, \Gamma^-$ be the relations $R_1,R_2,R_3,R_4$, and for $x,y \in V$ we say that $x \rl{i} y$ provided that $x \sim y$ in relation $i$.  Let $v_i$ denote the valency of relation $i$, and with $v_0=1$. Note that the parameters $p_{ij}^k$ where at least one of $i,j,k$ are equal to zero are all easily computed and well-defined from the fact that $\hat{\Gamma}, \overline{ {\Gamma}}, \check{\Gamma}, \Gamma^-$ are regular graphs that edge partition the complete graph.

    We now show all parameters $p_{ij}^k$ are well-defined. By symmetry we have $p_{ji}^k=p_{ij}^k$ when $p_{ij}^k$ is well-defined. Also, if for fixed $i,k$ we have $p_{ij}^k$ is well-defined for all except possible one value of $j$, then the $p_{ij}^k$ for the last value of $j$ must also be well-defined, and $\sum\limits_j p_{ij}^k = v_i$. 
    
    Let $g = (04)(13)$ be a permutation of $Sym(\{0,1,2,3,4\})$. We have $x \rl{i} y$ if and only if $x \rl{i^g} y'$, which we will use repeatedly. Thus if $p_{ij}^k$ is well-defined, we have $p_{i^gj^g}^k = p_{ij}^k$ and $p_{ij^g}^{k^g} = p_{i^gj}^{k^g} = p_{ij}^k$ are also well-defined. In particular all parameters $p_{ij}^k$ where at least one of $i,j,k$ is equal to 4 are well defined, since $p_{4j}^k = p_{0j^g}^k$, etc. We also have $p_{11}^1 = p_{33}^1=p_{13}^3=p_{31}^3$ are well-defined. We have $p_{22}^2 = 2r_{22}^2$ and $p_{22}^1=p_{22}^3=2r_{22}^1$ from the definition of $\overline{\Gamma}$.  

    Since $p_{11}^2$ is well-defined, we also have $p_{11^g}^{2^g} = p_{13}^2=p_{31}^2 = p_{33}^2$. This forces $p_{12}^2 = k-p_{11}^2-p_{13}^2-p_{14}^2$ to be well-defined, and similarly $p_{32}^2, p_{21}^2, p_{23}^2$.

    Now we consider the parameter $p_{12}^1$. Let $x=(w,a), y=(z,b)$ be vertices of $\hat{\Gamma}$ with $x \rl{1} y$, where $w,z$ are vertices of $\Gamma$ and $a,b \in \{1,2\}$.  Since $x \rl{1} y$, we must have $w \sim z$ in $\Gamma$. Let $u$ be a vertex with $u \neq z$ and $w \sim u \not\sim z$, and note there are $r_{12}^1$ choices for $u$.  We have $(u,a) \rl{1} x$ for exactly one choice of $a$, and $(u,a) \rl{2} y$ for both choices of $a$. Thus $p_{12}^1 = r_{12}^1$, and so we find $p_{21}^1, p_{32}^3, p_{23}^3$. This now forces $p_{21}^3=p_{12}^3$ and $p_{23}^1=p_{32}^1$, since all other parameters of the form $p_{2j}^3$ and $p_{2j}^1$ are well-defined. 

    Similarly, this forces $p_{13}^1$ to be well-defined since all other $p_{1j}^1$ are, so $p_{31}^1$ is as well. Lastly, applying $g$, we also obtain $p_{33}^3, p_{11}^3$. 
    
\end{proof}

In the particular case where $\lambda_2=0$ the graph $\hat{\Gamma}$ is a \textit{distance-regular graph}. In particular, $\hat{\Gamma}$ is a diameter 4 antipodal distance regular graph with intersection array $\{k,k-\lambda-1,\frac{\mu}{2},1;1,\frac{\mu}{2},k-\lambda-1,k  \}$.  For more on distance-regular graphs, see \cite{BCN}.  In particular, on pages 421-425 there is a table of feasible intersection arrays of hypothetical diameter 4 antipodal distance-regular graphs, including what was known at the time about existence and uniqueness of each array. See the dynamic survey \cite{DRGsurvey} pages 105-106 for the most current updates to this table.

\subsection{Example on Ordered Pairs}

Let $n \geq 5$ be a positive integer $\Gamma$ be the triangular graph $T(n)$.  This graph has as vertices the set of all two element subsets of $\{1..n\}$, with two vertices adjacent when they intersect in 1 element. This is a well-known family of primitive 
$( \binom{n}{2},  2(n-2), n-2,4 )$ SRGs, see \cite{SRG}, for example. These graphs have a natural 4-class double cover, which we describe below. 

Let $\hat{\Gamma}$ be the graph with vertex set $V = \{ (x,y) : x,y \in \{1,\dots,n\}, x \neq y \}$, with two distinct ordered pairs $(x,y), (w,z)$ adjacent when $x=w$ or $y=z$, but not both. 
This is easily seen to be a double cover of $T(n)$ with fibers of the form $\{ (x,y), (y,x) \}$. This generates a 4-class scheme, with parameters given below: 

\bigskip

$L_1 = \begin{pmatrix} 0&2n-4&0&0&0 \\
1&n-3&n-3&1&0 \\
0&2&2n-8&2&0 \\
0&1&n-3&n-3&1 \\
0&0&0&k&0 
\end{pmatrix}  \, \, \, \, L_2 = \begin{pmatrix}0&0&n^2-5n+6&0&0 \\
0&n-3&n^2-7n+12&n-3&0 \\
1&2n-8&n^2-9n+20&2n-8&1 \\
0&n-3&n^2-7n+12&n-3&0 \\
0&0&n^2-5n+6&0&0 \\
\end{pmatrix}$


\bigskip

$L_3 = \begin{pmatrix}
0&0&0&2n-4&0 \\
0&1&n-3&n-3&1 \\
0&2&2n-8&2&0 \\
1&n-3&n-3&1&0 \\
0&2n-4&0&0&0 \\
\end{pmatrix} \,  \,  \,  \, L_4 = \begin{pmatrix}
0&0&0&0&1 \\
0&0&0&1&0 \\
0&0&1&0&0 \\
0&1&0&0&0 \\
1&0&0&0&0 \\
\end{pmatrix}$

\bigskip

$P =\begin{pmatrix}
1&2n-4&n^2-5n+6&2n-4&1 \\
1&n-2&0&-n+2&-1 \\
1&n-4&-2n+6&n-4&1 \\
1&-2&0&2&-1 \\
1&-2&2&-2&1 \\
\end{pmatrix}$

\bigskip

This example appears a number of times in the table of Section \ref{ne}. 

\section{A linear system for 4-class double covers}

The first application we give of Lemma \ref{bundle} is in the form of a linear system of equations of $\mathbb{F}_{2}$. 

Let $\Gamma$ be a $(v,k,\lambda,\mu)$-strongly regular graph with edge set $E$.  For each edge $e \in E$ we assign a corresponding binary variable $x_e$. Given a set of edges $X \subset E$ we define $\varsigma( X) = \sum\limits_{e \in X} x_e $.  

\begin{thm}\label{linsys}

Let $W$ be the set of nonedges of the $(v,k,\lambda,\mu)$-strongly regular graph $\Gamma$ and $B(W)$ the associated bundles. Let $E$ be the set of edges of $\Gamma$ and $B(E),B^+(E)$ the associated set of associated bundles/augmented bundles, respectively. Let $\lambda_1,\lambda_2$ be nonegative integers with $\lambda_1+\lambda_2=\lambda$. 
Let $S_1$ be the set of equations of the form $$\varsigma( b) =  \frac{\mu}{2} \text{ (mod 2)}$$ for each $b \in B(W)$ and where the modulus is used as an operator in this context (as opposed to a relation). 
Let $S_2$ be the set of equations of the form $$\varsigma( b) = \lambda_2 \text{ (mod 2)}$$ for each $b \in B(E)$.

Lastly, let $S_3$ be the set of equations of the form $$\varsigma( b) = \lambda_2 \text{ (mod 2)}$$ for each $b \in B^+(E)$.

If $\Gamma$ has a 4-class double cover, then the system $S_1$ has a solution. 

In particular, if $\Gamma$ has a 4-class double cover with $p_{11}^1 = \lambda_1$ and $\lambda_2=\lambda-\lambda_1$, then

    \begin{itemize}
        \item If $\lambda$ is even, the system $S_1 \cup S_2$ has a solution.
        \item If $\lambda$ is odd the system $S_1 \cup S_3$ has a solution.
    \end{itemize}

\end{thm}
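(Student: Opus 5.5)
Given a 4-class double cover $\hat{\Gamma}$, we take a signing $\sigma$ of $\Gamma$ that realizes it, as in Section 2. We then set $x_e = 1$ if $\sigma(e) = -$ and $x_e = 0$ otherwise. The claim is that this assignment satisfies the stated equations. With this choice, $\varsigma(b)$ reduced mod 2 is exactly the weight $w(b)$ reduced mod 2. So everything comes down to computing the parity of the weight of each bundle and augmented bundle from the parameters of the scheme.

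The key local observation concerns two vertices $x \neq y$ of $\Gamma$ and a common neighbour $z$. Consider the lifts $(x,1)$ and $(y,1)$. The lift of $z$ adjacent to $(x,1)$ equals the lift adjacent to $(y,1)$ precisely when $\sigma(\{x,z\})\sigma(\{z,y\}) = +$, i.e. when the path $x z y$ has an even number of minus signs. Hence the number $c$ of common neighbours of $(x,1)$ and $(y,1)$ in $\hat{\Gamma}$ is the number of $z$ whose path $xzy$ has even weight. Since $w(B(x,y)) = \sum_z w(xzy)$, we get $w(B(x,y)) \equiv (\text{number of } z \text{ with odd path weight}) \pmod 2$.

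For a nonedge $\{x,y\}$, the lifts $(x,1)$ and $(y,1)$ lie in relation 2. Both they and $(x,1),(y,2)$ have $p_{11}^2 = \mu/2$ common neighbours (read off from $L_1$). Therefore exactly $\mu - \mu/2 = \mu/2$ of the paths have odd weight, which gives $w(B(x,y)) \equiv \mu/2$. This is $S_1$, and it proves the first claim. It uses only the scheme property, namely that $p_{11}^2 = p_{13}^2 = \mu/2$. (That $\mu$ is even follows from these two values summing to $\mu$, or directly from the matrix $L_1$.)

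For an edge $\{x,y\}$, choose the lifts so that they are adjacent in $\hat{\Gamma}$. This means taking $(x,1)$ and $(y,1)$ if $\sigma(\{x,y\}) = +$, and $(x,1)$ and $(y,2)$ if it is $-$. These lifts have $\lambda_1$ common neighbours, and the antipodal pair (relation 3) has $\lambda_2$ common neighbours. When $\sigma(\{x,y\}) = +$, the count above says that the paths of odd weight number $\lambda_2$, so $w(B(x,y)) \equiv \lambda_2$. When $\sigma(\{x,y\}) = -$, the roles swap relative to the pair $(x,1),(y,1)$. The paths of odd weight then number $\lambda_1 = \lambda - \lambda_2$, so $w(B(x,y)) \equiv \lambda + \lambda_2$.

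Combining the two cases: with $\delta = x_{\{x,y\}}$ we have $w(B(x,y)) \equiv \lambda_2 + \delta\lambda$. If $\lambda$ is even, this is $\lambda_2$ for every edge, which is $S_2$. If $\lambda$ is odd, it reads $w(B(x,y)) + \delta \equiv \lambda_2$, and the left side is exactly $w(B^+(x,y))$, which is $S_3$.

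The main point needing care is the bookkeeping of which lift pair lies in relation 1 versus relation 3 when the edge $\{x,y\}$ is negative. That is the step I would verify most carefully, checking the parity identity against the triangular example.
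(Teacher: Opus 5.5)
Your proposal is correct and follows essentially the same route as the paper: take the signing that realizes the cover, set $x_e=1$ exactly on negative edges, and read off the parity of each bundle's weight from the number of two-paths with opposite signs. That number is $\mu/2$ for a nonedge, and $\lambda_2$ or $\lambda_1$ for an edge according to its sign; you then combine the cases using $\lambda_1\equiv\lambda_2$ or $\lambda_1\equiv\lambda_2+1 \pmod 2$ together with $w(B^+(x,y))=w(B(x,y))+x_{\{x,y\}}$. Your bookkeeping for a negative edge is right: $(x,1),(y,1)$ then lie in relation 3 and have $\lambda_2$ common neighbours, so $\lambda_1$ paths have odd weight.
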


\begin{proof}

Consider a signing $\sigma$ of the edges of $\Gamma$ that produces the desired double cover $\hat{\Gamma}$. 
For each edge $e$, let $x_e=0$ if $\sigma(e)=+$, and let $x_e=1$ if $\sigma(e)=-$.
Let $\{u,v\}$ be a nonedge of $\Gamma$, and consider the vertices $(u,1),(v,1)$ of $\Gamma$.  Noting that $p_{11}^2 = \frac{\mu}{2}$, we must have that exactly $\frac{\mu}{2}$ of the two paths from $u$ to $v$ have both edges the same sign, and exactly $\frac{\mu}{2}$ of the two paths from $u$ to $v$ have edges with opposite signs. Therefore $\varsigma( B(x,y) ) =  \frac{\mu}{2} \text{ (mod 2)}$.  

Let $e=\{ u,v\}$ be an edge of $\Gamma$. If $x_e=0$, then exactly $\lambda_2$ two-paths from $u$ to $v$ have edges with opposite signs, thus $\varsigma( B(u,v) ) =  \lambda_2 \text{ (mod 2)}$. If $x_e=1$, then exactly $\lambda_1$ two-paths from $u$ to $v$ have edges with opposite signs, thus $\varsigma( B(u,v) ) =  \lambda_1 \text{ (mod 2)}$. 

For even $\lambda$, we must have $\lambda_1 \equiv \lambda_2 \text{ (mod 2)}$, thus we obtain $\varsigma( B(u,v) ) =  \lambda_2 \text{ (mod 2)}$ regardless of the sign of $e$.

For odd $\lambda$, we have $\lambda_1+1 \equiv \lambda_2 \text{ (mod 2)}$, and $\varsigma( B^+(u,v) ) = x_e+\varsigma( B(u,v) )$.  Thus regardless of the sign of $e$, we have $\varsigma( b) = \lambda_2 \text{ (mod 2)}$. 

\end{proof}

Given the strongly regular graph and parameters of a hypothetical 4-class double cover, the solution space of this linear system can be a useful tool for searching for the scheme. In the case of girth 5 distance-regular graphs, the system $S_1$ can be used to verify the non-existence claim for a $DRG-\{10,9,1,1;1,1,9,10\}$ in \cite{BCN} p. 372 in the first paragraph of the proof of Proposition 11.4.5 (in this case, the 4-cycles are equal to what we call bundles).  Similar linear systems were also used by Moorhouse in \cite{M1} to create new projective planes from semibiplanes. 

The original motivation behind this line of research was to use this system to conduct searches for open 4-class double covers.  However, in the case where $\mu \equiv 2 \text{ (mod 4) }$ or $\lambda_2 \equiv 1 \text{ (mod 2) }$ it is possible the system could be inconsistent. This condition turns out to be fruitful in and of itself, and thus we focus our attention on conditions that we can use to rule out 4-class double covers, which we give in the next section. Often these are more easily phrased without a direct appeal to the linear system, and instead in terms of Lemma \ref{bundle}, but it is implicit, as every nonexistence result in the next section implies the respective linear system of Theorem \ref{linsys} is inconsistent.

\section{Necessary conditions for 4-class double covers}

We now consider $4$-class double covers of a strongly regular graph ${\Gamma}$, and corollaries of Lemma \ref{bundle}.  We have some separate cases to consider. 

\begin{cor}\label{cor1}
    Let $\Gamma$ be a strongly regular graph, where $\mu \equiv 2 \text{ (mod 4)}$. Let $W$ be a set of of nonedges of $\Gamma$, and let $B(W)$ be the set of bundles corresponding to the nonedges in $W$. If $|B(W)|$ is odd, and every edge of $\Gamma$ is in an even number of bundles in $B(W)$, then $\Gamma$ has no 4-class double cover. 
\end{cor}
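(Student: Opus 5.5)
The plan is to argue by contradiction, combining the constraint on nonedge bundles from the proof of Theorem \ref{linsys} with the parity lemma, Lemma \ref{bundle}.

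Suppose $\Gamma$ has a 4-class double cover $\hat{\Gamma}$. Let $\sigma$ be a signing of the edges of $\Gamma$ that realizes it. Set $x_e = 1$ exactly when $\sigma(e) = -$. Then for any set of edges $b$, the weight $w(b)$ equals $\varsigma(b)$ computed over the integers.

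The first step is to show that every bundle of a nonedge has odd weight. Take a nonedge $\{u,z\}$ of $\Gamma$. In the 4-class scheme we have $p_{11}^2 = \frac{\mu}{2}$. So of the $\mu$ two-paths $u - c - z$ through common neighbours $c$, exactly $\frac{\mu}{2}$ have edges of opposite signs. Each such path contributes one minus sign to $B(u,z)$. Each of the other $\frac{\mu}{2}$ paths contributes either $0$ or $2$ minus signs. Hence
\[ w(B(u,z)) \equiv \frac{\mu}{2} \pmod 2. \]
This is exactly the content of the system $S_1$ in Theorem \ref{linsys}. Since $\mu \equiv 2 \pmod 4$, the number $\frac{\mu}{2}$ is odd, so every bundle $B(u,z)$ with $\{u,z\}$ a nonedge has odd weight.

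The second step applies Lemma \ref{bundle} with $X = B(W)$. By hypothesis, $|X|$ is odd and every edge of $\Gamma$ lies in an even number of elements of $X$. The lemma then gives some element of $B(W)$ of even weight, which contradicts the first step. Therefore $\Gamma$ has no 4-class double cover.

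The only point needing care is the identification $p_{11}^2 = \frac{\mu}{2}$, i.e.\ that the opposite-sign two-paths are counted correctly from the parameters. This comes straight from the form of $L_1$, whose row for relation $2$ reads $(0,\frac{\mu}{2},k-\mu,\frac{\mu}{2},0)$. It is also the same count already carried out in the proof of Theorem \ref{linsys}, so I expect no real obstacle.
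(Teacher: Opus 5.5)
Your proposal is correct and is the paper's argument: each nonedge bundle is $\mu$ edge-disjoint two-paths, exactly $\frac{\mu}{2}$ of which have opposite signs, so every element of $B(W)$ has odd weight, which contradicts Lemma \ref{bundle}. Your write-up is in fact cleaner than the printed proof, because you use the given set $W$ (the paper's proof says ``all nonedges'', carried over from Corollary \ref{corDRG}) and the correct parameter $p_{11}^2=\frac{\mu}{2}$ (the paper writes $p_{13}^1$, which is actually $\lambda_2$).
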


\begin{proof} Suppose a 4-class double cover of such a graph exists. We use 
Lemma \ref{bundle} with $W$ equal to the set of all nonedges of $\Gamma$. We have $|W| $ is odd, thus $|B(W)|$ is odd. 
Every edge is in an even number of bundles in $B(W)$ by our hypothesis. Let $\{x,y\} \in W$ be a nonedge, and $b \in B(W)$ the corresponding bundle. We have that $b$ contains $2\mu$ edges, which form $\mu$ edge-disjoint two-paths from $x$ to $y$. In the corresponding 4-class double cover, the parameter $p_{13}^1 = \frac{\mu}{2}$ is odd. Thus $\frac{\mu}{2}$ of these two-paths must have edges with the same sign, and $\frac{\mu}{2}$ of these two paths must have edges with opposite signs. 
Thus the number of edges in $b$ with negative signs is congruent to $\frac{\mu}{2} \text{ (mod 2)}$, which is odd.  This contradicts Lemma \ref{bundle}.
\end{proof}

\bigskip

We note that in most cases $|B(W)| = |W|$, that is, there is a correspondence between bundles/augmented bundles and the nonedge/edge that generates them.  The only exceptions 
occur when: 

\begin{enumerate} 

\item $\mu = 2$ and $\Gamma$ has a chordless 4-cycle $\{x_1,x_2,x_3,x_4\}$, in which case $B(\{x_1,x_3\}) = B(\{x_2,x_4\}) $

\item $\lambda = 2$ and $\Gamma$ has a $K_4$ with vertices $\{x_1,x_2,x_3,x_4\}$, in which case $B(\{x_1,x_2\}) = B(\{x_3,x_4\})$,  $B(\{x_1,x_3\}) = B(\{x_2,x_4\})$, $B(\{x_1,x_4\}) = B(\{x_2,x_3\})$

\item $\lambda = 1$ in which case any triangle $\{x_1,x_2,x_3\}$ satisfies $B^+(\{x_1,x_2\}) = B^+(\{x_1,x_3\}) = B^+(\{x_2,x_3\})$ 

\item $\lambda = 0$ where the bundles of adjacent vertices are empty

\item when $\lambda = \mu = 2$ and $\{x_1,x_2,x_3,x_4\}$ induces a kite, that is a $K_4$ minus the edge $\{x_1, x_3\}$, in which case the bundle of the nonedge $\{x_1, x_3\}$ is the same as the bundle of the edge $\{x_2, x_4\}$.

\end{enumerate}

Thus we must give special care to the cases where $\mu=2$ or $\lambda = 0,1,2$.
As an example, we examine 4-class double covers of the grid graphs (see \cite{SRG}) which are $SRG-(n^2, 2(n-1), n-2, 2)$. 

\bigskip  

\begin{cor}\label{grid}

    Let $n \geq 3$ and $\Gamma$ be the grid graph of an $n \times n$ grid. Then $\Gamma$ has no $4$-class double cover. 

\end{cor}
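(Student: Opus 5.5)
The plan is to apply Corollary \ref{cor1} directly. The grid graph is an $SRG-(n^2,2(n-1),n-2,2)$, so $\mu=2\equiv 2 \text{ (mod 4)}$ and the corollary applies. It therefore suffices to exhibit a set $W$ of nonedges such that $|B(W)|$ is odd and every edge of $\Gamma$ lies in an even number of bundles of $B(W)$. The only subtlety is the first exceptional case in the list above ($\mu=2$ with chordless 4-cycles), which is exactly what happens in the grid. So I will count bundles rather than nonedges.

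\textbf{Describing the bundles.} Label the vertices $(i,j)$ with $1\le i,j\le n$, two vertices being adjacent when they agree in exactly one coordinate. A nonedge is $\{(i,j),(k,l)\}$ with $i\neq k$ and $j\neq l$. Its two common neighbours are $(i,l)$ and $(k,j)$, so its bundle is the edge set of the chordless 4-cycle (the ``rectangle'') on rows $\{i,k\}$ and columns $\{j,l\}$. The other diagonal $\{(i,l),(k,j)\}$ gives the same bundle, and distinct rectangles give distinct bundles. Hence bundles of nonedges are in bijection with pairs (2-subset of rows, 2-subset of columns).

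\textbf{The choice of $W$.} Since $n\ge 3$, I restrict to rows $\{1,2,3\}$ and columns $\{1,2,3\}$. For each of the $3\cdot 3=9$ rectangles with row pair and column pair inside $\{1,2,3\}$, I put one diagonal into $W$. Then $B(W)$ consists of $9$ distinct bundles, an odd number.

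\textbf{The parity check.} Consider a horizontal edge $\{(i,j),(i,l)\}$. It lies in a rectangle of $B(W)$ only if $i,j,l\in\{1,2,3\}$. In that case the rectangle must have column pair $\{j,l\}$ and a row pair $\{i,m\}$ with $m\in\{1,2,3\}\setminus\{i\}$, which gives exactly $2$ such bundles. Otherwise the edge lies in $0$ of them. Vertical edges are handled symmetrically. So every edge lies in an even number of bundles of $B(W)$, and Corollary \ref{cor1} yields that there is no 4-class double cover.

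Equivalently, one can apply Lemma \ref{bundle} to $X=B(W)$ directly. Each such bundle would have odd weight, because exactly $\mu/2=1$ of its two two-paths is same-signed, contradicting the lemma. The only step needing care is the identification of bundles with rectangles, so that the count $|B(W)|=9$ is correct despite the coincidence $B(\{x_1,x_3\})=B(\{x_2,x_4\})$.
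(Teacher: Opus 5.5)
Your proof is correct and follows essentially the same route as the paper. The paper also restricts to a $3\times 3$ subgrid, observes that its nonedges yield exactly $9$ distinct bundles, checks that each edge lies in $2$ or $0$ of them, and applies Corollary \ref{cor1}. The only difference is that the paper takes all $18$ nonedges of the subgrid as $W$, whereas you take one diagonal per rectangle; $B(W)$ is the same in both cases.
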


\begin{proof}  Let $W$ equal to the set of all nonedges of the subgraph of $\Gamma$ induced by a $3 \times 3$ subgrid. We have $|W| = 18$.  Since every 4-cycle of $\Gamma$ is chordless and $\mu=2$, $|B(W)| = 9$ which is odd.  The number of bundles of $B(W)$ containing a fixed edge is precisely $2$ or $0$, depending on whether the edge is in the fixed subgrid or not. The result now follows from Corollary \ref{cor1}.
\end{proof}

\bigskip

When $\mu$ is singly even, the entire set of nonedges sometimes yields a contradiction. This is summarized in the next Corollary. 

\begin{cor}\label{corDRG}

    Let $\Gamma$ be a $(v,k,\lambda,\mu)$ strongly regular graph, where $v(v-k-1) \equiv \mu \equiv 2 \text{ (mod 4)}$ and $\mu \neq 2$.  Then $\Gamma$ has no $4$-class double cover. 

\end{cor}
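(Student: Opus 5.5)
Apply Corollary \ref{cor1} with $W$ the set of \emph{all} nonedges of $\Gamma$. Since $\mu \equiv 2 \pmod 4$, the hypothesis on $\mu$ in Corollary \ref{cor1} holds. It remains to check that $|B(W)|$ is odd and that every edge lies in an even number of bundles of $B(W)$.

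First, the number of nonedges is $|W| = \frac{v(v-k-1)}{2}$. Since $v(v-k-1) \equiv 2 \pmod 4$, this number is odd. Next we need $|B(W)| = |W|$, i.e. distinct nonedges give distinct bundles. By the list of exceptions given after Corollary \ref{cor1}, two distinct nonedges can share a bundle only when $\mu = 2$ (the chordless 4-cycle case; the kite case concerns an edge and a nonedge, so it is irrelevant when only nonedges are used). This is why the hypothesis $\mu \neq 2$ appears. To be safe, I would verify directly: the bundle $B(x,y)$ of a nonedge determines $\{x,y\}$ as the set of vertices of degree $\mu$ in the bipartite graph formed by $B(x,y)$, while the common neighbours have degree $2$. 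When $\mu \neq 2$ these degrees differ, so $\{x,y\}$ is recovered and the map $W \to B(W)$ is injective. Hence $|B(W)|$ is odd.

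Then count, for a fixed edge $\{w,z\}$ of $\Gamma$, the bundles containing it. The edge $\{w,z\}$ lies in $B(x,y)$ exactly when one of $w,z$ is an endpoint of the nonedge and the other is a common neighbour. Thus either $x=w$ and $y$ is a non-neighbour of $w$ adjacent to $z$, or symmetrically with $w,z$ swapped. The number of $y \neq w$ with $y \not\sim w$ and $y \sim z$ is $k-\lambda-1$ (the parameter $r_{12}^1$). The same count holds with the roles of $w$ and $z$ exchanged. These two families of nonedges are disjoint, because a nonedge containing both $w$ and $z$ would be $\{w,z\}$, which is an edge. So the edge lies in exactly $2(k-\lambda-1)$ bundles, which is even. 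Since bundles are distinct, this counts elements of $B(W)$ correctly.

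Corollary \ref{cor1} now gives nonexistence. The only delicate point is the injectivity of $W \to B(W)$, which uses $\mu \neq 2$. Everything else is routine counting.
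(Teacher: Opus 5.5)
Your proof is correct and is essentially the paper's argument. Like the paper, you take $W$ to be all nonedges, note that $|W| = v(v-k-1)/2$ is odd and that each edge lies in exactly $2(k-\lambda-1)$ bundles, and then invoke Corollary~\ref{cor1}; your direct check that $W \to B(W)$ is injective (the edges of $B(x,y)$ form a $K_{2,\mu}$, and $\mu \geq 6$ separates $x,y$ from the common neighbours by degree) fills in the detail that the paper leaves to its list of exceptions.
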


\begin{proof}  Let $W$ equal the set of all nonedges of $\Gamma$. We have $|W| = \frac{v(v-k-1)}{2}$, which is odd, thus $|W| = |B(W)|$ is odd.  The number of bundles of $B(W)$ containing a fixed edge is precisely $2(k-\lambda-1)$, which is always even. The result now follows from Corollary \ref{cor1}.
\end{proof}

Using Corollary \ref{corDRG}, we can rule out a certain parameter family of distance-regular graphs: 

\begin{cor}

    There are no diameter 4 antipodal distance-regular graphs with intersection array 
    $\{k,b_1,b_2,1;1,b_2,b_1,k \}$ where $b_2 \neq 1$ and $\frac{ k b_1 }{4}(1+k+\frac{ k b_1 }{2b_2})$ is odd. 

\end{cor}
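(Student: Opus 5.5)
We reduce the statement to Corollary \ref{corDRG}. Let $\hat{\Gamma}$ be a distance-regular graph with intersection array $\{k,b_1,b_2,1;1,b_2,b_1,k\}$. It is antipodal of diameter 4 with fibers of size 2: $k_4 = k b_1 b_2/(b_2 b_1 k)=1$. Folding gives a quotient $\Gamma$, which is strongly regular. Taking $\hat{\Gamma}$, the distance-2 and distance-3 graphs of $\hat\Gamma$, and the antipodal matching, the quotient structure gives exactly the 4-class double cover scheme with $\lambda_2=0$. Its relations are $\hat\Gamma$, $\overline{\Gamma}$ (distance 2), $\check\Gamma$ (distance 3) and $\Gamma^-$ (distance 4), as in the remark after the theorem characterizing 4-class double covers. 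Comparing with the array $\{k,k-\lambda-1,\frac{\mu}{2},1;1,\frac{\mu}{2},k-\lambda-1,k\}$ identifies the quotient parameters:
\[
b_1 = k-\lambda-1, \qquad \mu = 2b_2 .
\]
The quotient has valency $k$, and $v$ equals half the number of vertices of $\hat\Gamma$.

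Next compute $v$. For $\hat\Gamma$ the distance-$i$ valencies are
\[
k_0=1,\quad k_1=k,\quad k_2=\frac{k b_1}{b_2},\quad k_3=\frac{k b_1 b_2}{b_2 b_1}=k,\quad k_4=1,
\]
so $|\hat V| = 2+2k+\frac{kb_1}{b_2}$. This gives
\[
v = 1+k+\frac{kb_1}{2b_2}, \qquad v-k-1=\frac{kb_1}{2b_2}=\frac{kb_1}{\mu}.
\]
Hence the number of nonedges of $\Gamma$ is
\[
\frac{v(v-k-1)}{2} = \frac{kb_1}{4b_2}\Bigl(1+k+\frac{kb_1}{2b_2}\Bigr).
\]

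Now check the hypotheses of Corollary \ref{corDRG}. We need $\mu\equiv 2 \pmod 4$, i.e.\ $b_2$ odd, together with $\mu\ne 2$, i.e.\ $b_2\neq 1$, and $v(v-k-1)\equiv 2\pmod 4$, i.e.\ the number of nonedges is odd.

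The main obstacle is reconciling this with the stated hypothesis, which is that $\frac{kb_1}{4}\bigl(1+k+\frac{kb_1}{2b_2}\bigr)$ is odd. That quantity equals $b_2$ times the number of nonedges, and it is an integer. Since it is odd, $b_2$ must be odd, so $\mu=2b_2\equiv 2\pmod 4$. The number of nonedges is an integer dividing an odd number by the odd cofactor $b_2$, so it is odd, which gives $v(v-k-1)\equiv 2 \pmod 4$. Together with $b_2\ne1$ (so $\mu\ne 2$), Corollary \ref{corDRG} says $\Gamma$ has no 4-class double cover. This contradicts the existence of $\hat\Gamma$.

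One point still needs care: that the quotient $\Gamma$ is a primitive strongly regular graph, as the setup of the paper assumes. This is standard for antipodal double covers of diameter 4 (see \cite{BCN}), and it is where the $\lambda,\mu$ identification is justified. The only other place to be careful is the parity step deducing that $b_2$ is odd and that the nonedge count is odd.
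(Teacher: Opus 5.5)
Your argument is correct and follows the paper's route: the corollary is read off from Corollary \ref{corDRG} via $v=1+k+\frac{kb_1}{2b_2}$ and $\mu=2b_2$, and the given quantity is $b_2$ times the number of nonedges, so both factors are odd. Your closing paragraph is wrong but not needed: folded graphs of antipodal diameter-4 double covers need not be primitive (the 4-cube folds to $K_{4,4}$), yet the proofs of Corollaries \ref{cor1} and \ref{corDRG} never use primitivity---only that lifts of a nonedge have $\mu/2=c_2$ common neighbours---and the $\lambda,\mu$ identification comes from the folding itself, so that paragraph can be dropped.
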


We give an example of a two parameter family of arrays ruled out by this:

\begin{cor}

For positive integers $\lambda$ and $w$ with $\lambda \equiv 3 \text{ (mod 4) }$, and $w \equiv 3,5 \text{ (mod 8) }$, there is no distance-regular graph with an  intersection array of the form:  $$\left\{ \frac{w^2(\lambda+2)^2-\lambda^2}{4}, \frac{(w^2-1)(\lambda+2)^2}{4}, \lambda+2,1; 
1, \lambda+2, \frac{(w^2-1)(\lambda+2)^2}{4}, \frac{w^2(\lambda+2)^2-\lambda^2}{4} \right\}$$

\end{cor}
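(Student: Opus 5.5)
The plan is to obtain this as a direct specialization of the preceding corollary, which rules out arrays $\{k,b_1,b_2,1;1,b_2,b_1,k\}$ with $b_2\neq 1$ and $\frac{kb_1}{4}\left(1+k+\frac{kb_1}{2b_2}\right)$ odd. So we substitute
$$k=\frac{w^2(\lambda+2)^2-\lambda^2}{4},\qquad b_1=\frac{(w^2-1)(\lambda+2)^2}{4},\qquad b_2=\lambda+2,$$
and carry out a $2$-adic valuation count. Since $\lambda\equiv 3 \pmod 4$ we have $\lambda\ge 3$, so $b_2=\lambda+2\ge 5$ and in particular $b_2\neq 1$. It is convenient to write $m=\lambda+2$, which satisfies $m\equiv 1 \pmod 4$ and is odd.

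The key arithmetic steps, in order:

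\begin{enumerate}
\item \emph{Valuation of $w^2-1$.} The hypothesis $w\equiv 3,5 \pmod 8$ gives $w^2\equiv 9 \pmod{16}$, so $w^2-1$ is divisible by $8$ but not by $16$.
\item \emph{Valuation of $b_1$.} Since $m$ is odd, $b_1=(w^2-1)m^2/4$ is an integer with $b_1\equiv 2 \pmod 4$.
\item \emph{Valuation of $k$.} Using $\lambda=m-2$, write
$$w^2m^2-\lambda^2=m^2(w^2-1)+4(m-1).$$
The first term is $\equiv 8 \pmod{16}$. The second is $\equiv 0 \pmod{16}$ because $m\equiv 1 \pmod 4$. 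Hence $w^2m^2-\lambda^2\equiv 8 \pmod{16}$, and so $k\equiv 2 \pmod 4$.
\item \emph{Parity of $\frac{kb_1}{4}$.} Steps 2 and 3 give $v_2(kb_1)=2$, so $\frac{kb_1}{4}$ is an odd integer.
\item \emph{Parity of $\frac{kb_1}{2b_2}$.} This quantity equals $\frac{k(w^2-1)m}{8}$, so it is an integer. Since $m$ is odd, its $2$-adic valuation is $2-1=1$, so it is even.
\item \emph{Parity of the second factor.} As $k$ is even, $1+k+\frac{kb_1}{2b_2}$ is odd.
\end{enumerate}

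Combining steps 4 and 6, the product $\frac{kb_1}{4}\left(1+k+\frac{kb_1}{2b_2}\right)$ is odd. The previous corollary then shows that no such distance-regular graph exists.

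The only delicate point is step 3, namely pinning down $k$ modulo $4$. A naive estimate gives only that $w^2m^2-\lambda^2$ is divisible by $8$. The rewriting $m^2(w^2-1)+4(m-1)$ is what uses both congruence hypotheses: $w\equiv 3,5 \pmod 8$ controls the first term and $\lambda\equiv 3 \pmod 4$ makes the second term vanish modulo $16$. Everything else is routine bookkeeping.

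Equivalently, one could phrase the argument through Corollary \ref{corDRG} applied to the quotient SRG. That SRG has $\mu=2b_2=2m\equiv 2 \pmod 4$ with $\mu\neq 2$, and $v-k-1=\frac{kb_1}{2b_2}$. The condition $v(v-k-1)\equiv 2 \pmod 4$ is then exactly the parity computed above, since multiplying by the odd number $b_2$ does not change parity.
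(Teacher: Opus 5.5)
Your proposal is correct and follows the paper's intended route. The paper gives this family only as an instance of the preceding corollary, which is Corollary~\ref{corDRG} applied to the quotient SRG with $\mu = 2(\lambda+2) \equiv 2 \pmod 4$. Your $2$-adic computation ($k \equiv b_1 \equiv 2 \pmod 4$ and $\frac{kb_1}{2b_2}$ even) is exactly the parity check that specialization needs, and your closing remark that $b_2$ is odd, so the condition matches $v(v-k-1)\equiv 2 \pmod 4$, makes explicit a step the paper leaves implicit.
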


Specializing $w=3$ and $\lambda = 7$ , we obtain nonexistence for the array $\{ 170,162,9,1;,1,9,162,170 \}$, which was previously unresolved (see \cite{DRGsurvey}). 

For the special case $w=\lambda \equiv 3 \text{ (mod 8)}$ we have the nonexistence of the intersection arrays $$\left\{ \frac{\lambda^2((\lambda+2)^2-1)}{4}, \frac{\lambda^2((\lambda+2)^2-1)}{4}-\lambda-1, \lambda+2,1; 
1, \lambda+2, \frac{\lambda^2((\lambda+2)^2-1)}{4}-\lambda-1, \frac{\lambda^2((\lambda+2)^2-1)}{4} \right\}.$$ In particular, this resolves the cases $\lambda=3$ corresponding to the intersection array
$\{ 54,50,5,1;,1,5,50,54 \}$ (see \cite{DRGsurvey}).

\begin{cor}\label{lameven}

    Let $\Gamma$ be $(v,k,\lambda,\mu)$ strongly regular graph, where $\lambda$ is even. Let $W$ be a set of of edges of $\Gamma$, and let $B(W)$ be the set of bundles corresponding to the edges in $W$. If $|B(W)|$ is odd, and every edge of $\Gamma$ is in an even number of bundles in $B(W)$, then $\Gamma$ has no 4-class double cover with $\lambda_1 = p_{11}^1$ odd. 

\end{cor}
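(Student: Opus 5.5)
We argue by contradiction, exactly parallel to the proof of Corollary \ref{cor1}, but now using the equations of type $S_2$ rather than $S_1$. Suppose $\Gamma$ has a 4-class double cover $\hat{\Gamma}$ with $\lambda_1 = p_{11}^1$ odd, and let $\sigma$ be a signing of the edges of $\Gamma$ realizing $\hat{\Gamma}$. Since $\lambda$ is even and $\lambda = \lambda_1+\lambda_2$, the parameter $\lambda_2 = p_{11}^3$ is also odd. We then apply Lemma \ref{bundle} to the family $X = B(W)$. By hypothesis $|B(W)|$ is odd and every edge of $\Gamma$ lies in an even number of members of $B(W)$, so Lemma \ref{bundle} yields a bundle $b \in B(W)$ of even weight.

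The main step is to show that in fact every bundle $b = B(u,v)$ with $\{u,v\}$ an edge has odd weight, which gives the contradiction. This is the computation in the proof of Theorem \ref{linsys}. Fix the edge $e=\{u,v\}$ and consider the $\lambda$ edge-disjoint two-paths $u - z - v$ through the common neighbors $z$; $B(u,v)$ is exactly the union of their edges. Lift to $\hat{\Gamma}$ via $(u,1)$. If $\sigma(e)=+$, then $(u,1)$ and $(v,1)$ are adjacent in $\hat{\Gamma}$, and the common neighbors $(z,i)$ of $(u,1)$ in $\hat\Gamma$ and $(v,1)$ in $\check\Gamma$ correspond precisely to two-paths with opposite signs. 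That count is $p_{13}^1 = \lambda_2$ (using the symmetries of the proof of the Theorem, $p_{13}^1=p_{31}^1=\lambda_2$ as read off from $L_1$). If instead $\sigma(e)=-$, we use $(u,1)$ and $(v,2)$ and obtain $\lambda_1$ opposite-signed two-paths. Each opposite-signed two-path contributes exactly one minus sign, and each same-signed two-path contributes $0$ or $2$. Therefore $w(b) \equiv \lambda_2$ or $\lambda_1 \pmod 2$, and both values are odd. This contradicts the even-weight bundle produced above.

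I expect the only delicate point to be verifying that "opposite sign" two-paths are counted by the parameter $\lambda_2$ (respectively $\lambda_1$), and hence that the parity is independent of $\sigma(e)$. This is exactly where the evenness of $\lambda$ is used: since $\lambda_1 \equiv \lambda_2 \pmod 2$, the sign of the central edge becomes irrelevant. We are also allowed that some bundles in $B(W)$ may coincide (for instance when $\lambda=2$ and $\Gamma$ contains a $K_4$). Because the hypothesis is stated in terms of $|B(W)|$ and membership in the set $B(W)$, Lemma \ref{bundle} applies directly without any correction for repeated bundles.
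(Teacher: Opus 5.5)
Your proposal is correct and is the paper's argument: the paper's proof is just ``use Lemma \ref{bundle} with $B(W)$,'' which implicitly relies on the $S_2$ computation from Theorem \ref{linsys}. That computation gives every edge bundle weight $\equiv \lambda_2 \equiv \lambda_1 \equiv 1 \pmod 2$ when $\lambda$ is even and $\lambda_1$ is odd, and you have simply written it out explicitly, with the parameter identifications $p_{13}^1=\lambda_2$ and $p_{11}^1=\lambda_1$ checked correctly.
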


\begin{proof}  Use Lemma \ref{bundle} with $B(W)$ as described in the theorem. 
\end{proof}





\begin{cor}\label{lamnbhd}

    Let $\Gamma$ be $(v,k,\lambda,\mu)$ strongly regular graph, where $k$ is odd and $\lambda \equiv 2 \text{ (mod 4) }$. Then $\Gamma$ has no 4-class double cover with $p_{11}^1$ odd. 

\end{cor}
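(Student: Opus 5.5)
The plan is to apply Corollary \ref{lameven} with $W$ taken to be the set of edges through a single vertex; this is what the label of the corollary suggests. Fix a vertex $x$ of $\Gamma$ and let $W=\{\{x,y\} : y \sim x\}$. Then $|W|=k$, which is odd by hypothesis. Since $\lambda\equiv 2 \pmod 4$, $\lambda$ is even, so Corollary \ref{lameven} applies. Recall why it does: if $p_{11}^1=\lambda_1$ were odd, then $\lambda_2=\lambda-\lambda_1$ is odd, so every bundle of an edge has odd weight. It therefore suffices to check two things: that $|B(W)|$ is odd, and that every edge of $\Gamma$ lies in an even number of the bundles $B(x,y)$ with $y\sim x$.

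The parity count comes first, split by the position of an edge $e$ relative to $x$.
\begin{enumerate}
\item If $e=\{x,z\}$ with $z\sim x$, then $e\in B(x,y)$ exactly when $z$ is a common neighbor of $x$ and $y$. This happens for the $\lambda$ choices $y\in N(x)\cap N(z)$, and $\lambda$ is even.
\item If $e=\{y,z\}$ with both $y,z\in N(x)$, then $e$ lies in $B(x,y)$ (since $z$ is a common neighbor of $x,y$) and in $B(x,z)$. It lies in no other bundle of $W$, since any $B(x,y'')$ only contains edges incident to $x$ or $y''$. That gives $2$ bundles.
\item If $e=\{y,w\}$ with $y\in N(x)$ and $w\notin N(x)\cup\{x\}$, then $w$ is not a common neighbor of $x$ and $y$. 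Edges missing $N(x)\cup\{x\}$ entirely are also in no bundle of $W$. Both counts are $0$.
\end{enumerate}
So every edge is in an even number of elements of $B(W)$.

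The step I expect to need the most care is $|B(W)|=|W|$, i.e.\ that the $k$ bundles $B(x,y)$ are pairwise distinct. By the list of exceptional coincidences following Corollary \ref{cor1}, the only relevant danger with $\lambda$ even is $\lambda=2$ with a $K_4$ on $\{x,y,z,w\}$. (The $\lambda=0$ case is excluded since $\lambda\equiv 2 \pmod 4$, and the kite case only identifies an edge bundle with a nonedge bundle.) In that $K_4$ situation $B(x,y)=B(z,w)$, but $\{z,w\}\notin W$ since it does not contain $x$. Meanwhile $B(x,y)$ and $B(x,y')$ for $y\neq y'$ differ, because $B(x,y)$ consists of edges $\{x,u\},\{y,u\}$ with $u\in N(x)\cap N(y)$. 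If $B(x,y)=B(x,y')$, then the non-$x$ endpoints $\{y\}\cup(N(x)\cap N(y))$ would have to match $\{y'\}\cup(N(x)\cap N(y'))$, which only occurs in the $K_4$ pattern already excluded. I would verify this carefully, since it is where the hypothesis on $\lambda$ (nonzero, hence bundles nonempty) enters. With distinctness, $|B(W)|=k$ is odd, and Corollary \ref{lameven} (equivalently Lemma \ref{bundle}) gives the contradiction.
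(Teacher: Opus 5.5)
Your proposal is correct and is the paper's proof: the paper also takes $W$ to be the $k$ edges through a fixed vertex $x$, shows each edge lies in $\lambda$, $2$ or $0$ of the bundles by exactly your three cases, and then invokes Corollary \ref{lameven}. The paper never addresses $|B(W)|=|W|$, and your own justification of it is muddled. The endpoint sets do coincide in a $K_4$, e.g.\ for $B(x,y)$ and $B(x,z)$, so that case is not ``already excluded''; those two bundles are nevertheless different. The clean argument is this: the edges of $B(x,y)$ through $x$ determine $N(x)\cap N(y)$, and every other edge of the bundle joins this set to $y\notin N(x)\cap N(y)$, so for $\lambda\geq 1$ the bundle determines $y$. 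Alternatively, you can sum $w(B(x,y))$ over the $k$ vertices $y\sim x$, which makes distinctness irrelevant.
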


\begin{proof}  Let $x$ be a vertex of $\Gamma$, and let $W$ be equal to the set of all edges of $\Gamma$ containing $x$. The number of bundles is $k$ which is odd, and each edge $e$ is in $\lambda,2$ or $0$ bundles depending on whether $x$ is in $e$, or $x$ is not in $e$ and both vertices of $e$ are adjacent to $x$, or otherwise. The result follows from Corollary \ref{lameven}.
\end{proof}


\bigskip

The following theorem concerns the strongly regular graph families $V^{\epsilon}(2m,q)$, see \cite{SRG} for more details.
Given a natural number $m$, a prime power $q$ and an $\epsilon \in \{-1,1\}$, we define $V^{\epsilon}(2m,q)$ as follows.  Let $V = \mathbb{F}_q^{2m}$ and $Q(x,y)$ a nondegenerate quadratic form on $V$ of type $\epsilon$. For $x,y \in V$, we define adjacency by $x \sim y$ if and only if $x \neq y$ and $Q(x-y)=0$. The graph $V = \mathbb{F}_q^{2m}$ is known to be strongly regular with parameters $(q^{2m}, (q^m-\epsilon)(q^{m-1}+\epsilon), q(q^{m-1}-\epsilon)(q^{m-2}+\epsilon)+q-2, q^{m-1}(q^{m-1}+\epsilon))$.

We obtain the following theorem: 

\begin{thm}\label{Patv2}

    The strongly regular graphs $VO^+(2m,q)$ with with $m$ odd and $q \equiv 3 \text{ (mod 4)}$ have no 4-class double covers.  Similarly, the strongly regular graphs $VO^-(2m,q)$ with with $m$ even and $q \equiv 3 \text{ (mod 4)}$ have no 4-class double covers. 
    In particular, there is no DRG-$\{20,18,3,1;1,3,18,20\}$.
    
\end{thm}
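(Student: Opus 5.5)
The plan is to apply Corollary \ref{cor1} with a set $W$ of nonedges built from the affine structure of $V=\mathbb{F}_q^{2m}$. The natural choice $W=$ all nonedges (Corollary \ref{corDRG}) does not work here. For $VO^+(2m,q)$ we get $v-k-1=(q^m-1)q^{m-1}(q-1)$, which is divisible by $4$ when $m$ is odd and $q\equiv 3 \pmod 4$, so $|W|=v(v-k-1)/2$ is even. Instead I would use a single parallel class of anisotropic lines.

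\textbf{Step 1: $\mu\equiv 2 \pmod 4$.} For $\epsilon=+$ and $m$ odd, $m-1$ is even, so $q^{m-1}\equiv 1 \pmod 8$ and $q^{m-1}+1\equiv 2\pmod 4$. Hence $\mu=q^{m-1}(q^{m-1}+1)\equiv 2\pmod 4$. For $\epsilon=-$ and $m$ even, $m-1$ is odd, so $q^{m-1}\equiv 3\pmod 4$. Then $q^{m-1}-1\equiv 2 \pmod 4$ and again $\mu\equiv 2\pmod 4$. In both cases $\mu>2$ (we may assume $m\ge 2$, so the graph is a primitive SRG), so distinct nonedges give distinct bundles.

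\textbf{Step 2: the set $W$.} Fix $u\in V$ with $Q(u)\neq 0$, and let $W$ be the set of pairs $\{y,z\}$ with $y\ne z$ and $y-z\in\langle u\rangle$. Since $Q(tu)=t^2Q(u)\neq 0$ for $t\neq 0$, every such pair is a nonedge. The vectors $y$ and $z$ lie on one of the $q^{2m-1}$ lines parallel to $u$, so
\[
|W|=|B(W)|=q^{2m-1}\cdot\frac{q(q-1)}{2}.
\]
This is odd because $q$ is odd and $q-1\equiv 2\pmod 4$.

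\textbf{Step 3: the parity of edge incidences.} Fix an edge $\{a,b\}$ and set $d=b-a$, so $Q(d)=0$. The edge lies in $B(y,z)$ exactly when one endpoint is in $\{y,z\}$ and the other is a common neighbour of $y$ and $z$. Suppose $a\in\{y,z\}$; then $b$ must be adjacent to the other point $a+tu$ with $t\ne 0$. This requires $Q(d-tu)=t^2Q(u)-tB(d,u)=0$, which has exactly one nonzero solution $t=B(d,u)/Q(u)$ when $B(d,u)\neq0$, and none otherwise. The roles of $a$ and $b$ are symmetric: the equation $Q(-d-tu)=0$ has the single nonzero solution $t=-B(d,u)/Q(u)$ under the same condition. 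So every edge lies in either $0$ or $2$ bundles of $B(W)$. I would double-check that the two contributions are genuinely different nonedges: they are $\{a,a+tu\}$ and $\{b,b-tu\}$, which are distinct since $b\notin a+\langle u\rangle$. Corollary \ref{cor1} then gives nonexistence. This verification is the main step, and it is where the choice of a full parallel class (rather than a single line) is essential: a single line gives the count $0$ or $1$.

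\textbf{Step 4: the array $\{20,18,3,1;1,3,18,20\}$.} Such a DRG would be a 4-class double cover (with $\lambda_2=0$) of an SRG with $k=20$ and $\lambda=k-b_1-1=1$. Since $\mu/2=b_2=3$ gives $\mu=6$, and the standard SRG parameter relation gives $v=81$, this SRG is $(81,20,1,6)$. That is $VO^-(4,3)$, with $m=2$ even and $q=3$. I would then cite the known uniqueness of the $(81,20,1,6)$ SRG (Brouwer–Haemers) to conclude that no such DRG exists.
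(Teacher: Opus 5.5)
Your proof is correct, but it uses a different set $W$ from the paper's. The paper takes $W$ to be all nonedges $\{x,y\}$ with $Q(x-y)$ a nonzero square.
\begin{itemize}
\item Counting that $W$ needs the fact that $Q$ takes every nonzero value equally often, so $W$ is exactly half the nonedges. It then needs a mod-$8$ check involving $q^m-\epsilon$.
\item Evenness of the incidences is structural. That $W$ is invariant under translations and $x\mapsto -x$, and the involution $y\mapsto x-y$ swaps the bundles through the edge $\{0,x\}$ that have $0$ as an endpoint with those that have $x$ as an endpoint.
\end{itemize}
Your single parallel class of anisotropic lines gives $|W|=q^{2m}(q-1)/2$ by a trivial count that does not depend on $\epsilon$ or $m$. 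So those hypotheses enter only through $\mu\equiv 2 \pmod 4$. Your direct computation also gives the exact incidence number ($2$ if $d$ is not orthogonal to $u$, $0$ otherwise), with no equidistribution input. The two arguments rest on the same pairing. Your nonedges $\{a,a+tu\}$ and $\{b,b-tu\}$ are exactly swapped by the paper's involution $y\mapsto a+b-y$, which also preserves your $W$. So Step 3 could have been shortened that way.

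There is one small gap. The case $m=1$ is included in the $VO^+$ statement, and $VO^+(2,q)$ is the $q\times q$ grid, a primitive SRG with $\mu=2$. So your remark that ``we may assume $m\ge 2$, so the graph is primitive'' does not justify excluding it. You can either cite Corollary \ref{grid}, or observe that your argument still works there. Take a $4$-cycle $y,y+e,y+e+f,y+f$ with $e+f\in\langle u\rangle$. The other diagonal has difference $e-f$, and $e-f\in\langle u\rangle$ would force the nonzero isotropic vector $e$ into $\langle u\rangle$, which is impossible. Hence $|B(W)|=|W|$ still holds. The paper's proof is also silent on this case. Finally, rename the polar form of $Q$, since $B(\cdot,\cdot)$ already denotes bundles in the paper.
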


\begin{proof}
We utilize Corollary \ref{cor1}.  Let $q$ be a prime power congruent to 3 modulo 4, and $m$ odd or even depending on whether $\epsilon =1,-1$, respectively. Let $V = \mathbb{F}_q^{2m}$, and let $Q(x)$ be a nondegenerate quadratic form of the appropriate type on $V$, and let $\Gamma = VO^\epsilon(2m,q)$. For $\Gamma$, we have $\mu = q^{m-1}(q^{m-1}+\epsilon) \equiv 2 \text{ (mod 4)}$ for both cases. Let $W$ be the set of nonedges $\{x,y \}$ such that $Q(x-y)$ is a nonzero square. It is known that for nonzero $b \in \mathbb{F}_q$ the number of solutions to $Q(x) = b$ is independent of the choice of $b$ (see Theorem 6.26 in \cite{FF}, page 282). In particular, $|W|$ is equal to exactly half the number of nonedges of the graph. 

Thus $|W| = \frac{1}{4}q^{2m} (q^{2m} - (q^m-\epsilon)(q^{m-1}+\epsilon) - 1) = \frac{1}{4} q^{2m} q^{m-1}(q-1)(q^m-\epsilon)$.  The quantity $q^{2m} q^{m-1}(q-1)(q^m-\epsilon) \equiv 4 \text{ (mod 8)}$ for the cases satisfying the hypothesis of the theorem, thus $|W|$ is odd. 




Let $\{ u, v \}$ be an edge of $\Gamma$.  We claim that each edge of $\Gamma$ is contained in an even number of bundles of $B(W)$.  In order to show this, we use affine transformations of $V$ which are also graph automorphisms of $\Gamma$.  For $a \in V$, let $\varphi: V \to V$ be given by $\varphi_a(x) = x + a$.  It is clear that $\varphi_a$ preserves $Q$ and therefore is an automorphism of $\Gamma$ and leaves $W$ invariant.  For $\alpha \in \mathbb{F}_q^*$, let $\psi_\alpha: V \to V$ be given by $\psi_\alpha(x) = \alpha x$.  Here it is also clear that $\psi_\alpha$ is an automorphism of $\Gamma$, and since $Q(\psi_\alpha(x)) = Q(\alpha x) = \alpha^2 x$, we again have that $B(W)$ is invariant.  We therefore have that the number of bundles of $B(W)$ containing $\{u, v\}$ is the same as the number of bundles of $B(W)$ containing $\varphi_{-u} (\{ u, v \}) = (0, x)$, where $x = v - u$.  

We may simplify the count further.  Let $S_1 = \{ B(y, z) \in B(W) : \{0, x\} \in B(y, z), 0 \in \{ y, z\} \}$, and let $S_2 = \{ B(y, z) \in B(W) : \{0, x\} \in B(y, z), x \in \{ y, z\} \}$.  Note that $S_1$ and $S_2$ partition the set of bundles of $B(W)$ that contain $\{x, y\}$. We note that $\psi_{-1}(\varphi_{-x}(S_1)) = S_2$, and so $|S_1| = |S_2|$.  Therefore, the desired count is equal to $2|S_1|$, which is even.  Therefore, Corollary \ref{cor1} applies.

Lastly, for the DRG-$\{20,18,3,1;1,3,18,20\}$, this would be a 4-class double cover of a SRG-$(81,20,1,6)$.  It is known that $VO^-(4,3)$ is the unique strongly regular graph with these parameters (see \cite{SRG}). Thus no such distance-regular graph exists. 
\end{proof}

With more effort, the value of $|S_1|$ can be calculated explicitly.  For example, when $m = 2$, in order to count $|S_1|$, we fix $x = (x_1, x_2, xi_3, x_4) \in V$ assume $\{0, x\} \in B(0, y)$, with and $B(0, y) \in S$.  We wish to count the number of $y = (y_1, y_2, y_3, y_4)$ satisfying this condition.  We do this in two cases, and use the quadratic form $Q(x) = x_1^2 + x_2^2 + x_3 x_4$ for convenience.  

We first assume $(x_1, x_2) = (0, 0)$.  Since $Q(x) = 0$, this implies $x_3 x_4 = 0$.  Without loss of generality, we may assume $x_3 = 0$ and therefore $x_4 \neq 0$.  We immediately have $y_1^2 + y_2^2 + y_3 y_4 = s$ and $(x_1 - y_1)^2 + (x_2 - y_2)^2 + (x_3 - y_3)(x_4 - y_4) = 0$.  We simplify this to obtain

\begin{align*}
    x_1^2 - 2x_1 y_1 + y_1^2 + x_2^2 - 2x_2 y_2 + y_2^2 + x_3 x_4 - x_3 y_4 - x_4 y_3 + y_3 y_4 &= 0 \\
    s - 2x_1 y_1 - 2x_2 y_2 - x_3 y_4 - x_4 y_3 &= 0 \\
    2x_1 y_1 + 2 x_2 y_2 + x_3 y_4 + x_4 y_3 &= s \\
    x_4 y_3 &= s.
\end{align*}
The last line follows from the fact that $x_1 = x_2 = x_3 = 0$.  We also have that $y_1^2 + y_2^2 + y_3 y_4 = s$.  Since $y_3 = sx_4^{-1}$, we have $y_1^2 + y_2^2 + sx_4^{-1} y_4 = s$.  By rearranging terms we obtain the equation

\begin{equation*}
    y_1^2 + y_2^2 = s(1 - x_4^{-1} y_4)
\end{equation*}

We wish to count the number of $y$ and $s$ that satisfy this equation.  There are two cases to consider: either $y_4 \neq x_4$ or $y_4 = x_4$.  In the former case there are $q - 1$ ways of choosing $y_4$, and there are $\frac{q-1}{2}$ ways of choosing $s$.  In this way the right-hand side of the above equation is nonzero.  Because of this, there are $q + 1$ ways of choosing $y_1$ and $y_2$.  This means there are $(q + 1)(q - 1)(\frac{q-1}{2})$ total choices for $y$.  

In the case where $y_4 = x_4$ we have $y_1^2 + y_2^2 = 0$.  This forces $y_1 = y_2 = 0$, and we have $\frac{q-1}{2}$ choices for $y_3$.  This means in total there are $(q+1)(q-1)(\frac{q-1}{2}) + \frac{q-1}{2} = q^2 (\frac{q-1}{2})$ choices for $y$.  We conclude that there are $q^2 (\frac{q-1}{2})$ bundles of $S_1$.  

We now consider the case in which $(x_1, x_2) \neq (0, 0)$.  We first chose $y_1, y_2, $ and $s$ in $q^2(\frac{q-1}{2})$ ways.  We immediately have $y_1^2 + y_2^2 + y_3 y_4 = s$ and $(x_1 - y_1)^2 + (x_2 - y_2)^2 + (x_3 - y_3)(x_4 - y_4) = 0$.  We simplify this to obtain

\begin{align*}
    x_1^2 - 2x_1 y_1 + y_1^2 + x_2^2 - 2x_2 y_2 + y_2^2 + x_3 x_4 - x_3 y_4 - x_4 y_3 + y_3 y_4 &= 0 \\
    - 2x_1 y_1 - 2x_2 y_2 - x_3 y_4 - x_4 y_3 + s &=0 \\
    2x_1 y_1 + 2x_2 y_2 + x_3 y_4 + x_4 y_3 = s.
\end{align*}

Using these equations, we now solve for possible values for $y_3$ and $y_4$.  To do this, we let $\alpha = y_3 y_4 = s - y_1^2 - y_2^2$, and $\beta = x_3 y_4 + x_4 y_3 = s - 2x_1 y_1 - 2 x_2 y_2$.  Note that if $y_3 = 0$,then we can solve for a unique solution to $y_4$.  Otherwise, we have $y_4 = \alpha y_3^{-1}$.  By substitution we obtain

\begin{align*}
    \alpha x_3 y_3^{-1} + x_4 y_3 &= \beta \\
    \alpha x_3 + x_4 y_3^2 &= \beta y_3 \\
    x_4 y_3^2 - \beta y_3 + \alpha x_3 &= 0 \\
    y_3 = \frac{\beta \pm \sqrt{\beta^2 - 4\alpha x_3 x_4}}{2x_4}.
\end{align*}

These values of $z_3$ exist either in $\mathbb{F}_q$ or in the quadratic extension $\mathbb{F}_{q^2}$.  This depends on whether $\beta^2 - 4 \alpha x_3 x_4$ is $0$, a nonzero square or a nonsquare in $\mathbb{F}_q$.  We now investigate the quantity $\beta^2 - 4 \alpha x_3 x_4$.  

\begin{align*}
    \beta^2 - 4\alpha x_3 x_4 &= (s - 2(x_1 y_1 + x_2 y_2))^2 - 4(s - (y_1^2 + y_2^2))x_3 x_4 \\
    &= s^2 - 4s(x_1 y_1 + x_2 y_2) + 4(x_1 y_1 + x_2 y_2)^2 - 4sx_3 x_4 + 4x_3 x_4(y_1^2 + y_2^2) \\
    &= s^2 - 4s(x_1 y_1 + x_2 y_2) + 4(x_1^2 y_1^2 + 2x_1 x_2 y_1 y_2 + x_2^2 y_2^2) - 4sx_3 x_4 + 4x_3 x_4(y_1^2 + y_2^2) \\
    &= s^2 - 4s(x_1 y_1 + x_2 y_2) + 4((x_1^2 + x_3 x_4)y_1^2 + 2x_1 x_2 y_1 y_2 + (x_2^2 + x_3 x_4)y_2^2) - 4sx_3 x_4 \\
    &= s^2 - 4s(x_1 y_1 + x_2 y_2) + 4(-x_2^2 y_1^2 + 2x_1 x_2 y_1 y_2 -x_1^2 y_2^2) - 4sx_3 x_4 \\
    &= s^2 - 4s(x_1 y_1 + x_2 y_2) - 4(x_2^2 y_1^2 - 2x_1 x_2 y_1 y_2 + x_1^2 y_2^2) - 4sx_3 x_4 \\
    &= s^2 - 4s(x_1 y_1 + x_2 y_2) - 4(x_2 y_1 - x_1 y_2)^2 - 4sx_3 x_4 \\
    &= s^2 - 4s(x_1 y_1 + x_2 y_2) - 4(x_2 y_1 - x_1 y_2)^2 + 4s(x_1^2 + x_2^2) \\
    &= s^2 + 4s(x_1^2 - x_1 y_1 + x_2^2 - x_2 y_2) - 4(x_2 y_1 - x_1 y_2)^2 \\
    &= s^2 + 4s(x_1(x_1- y_1) + x_2(x_2 - y_2)) - 4(x_2 y_1 - x_1 y_2)^2
\end{align*}

We now change parameters by letting $y_1 = x_1 - a$ and $y_2 = x_2 - b$.  We now obtain

\begin{align*}
    \beta^2 - 4\alpha x_3 x_4 &= s^2 + 4s(ax_1 + bx_2) - 4(x_2(x_1 - a) - x_1(x_2 - b))^2 \\
    &= s^2 + 4s(ax_1 + bx_2) - 4(bx_1 - ax_2)^2.
\end{align*}

We now change parameters again by letting $c = ax_1 + bx_2$ and $d = bx_1 - ax_2$.  Making these substitutions we obtain 

\begin{equation*}
    \beta^2 - 4\alpha x_3 x_4 = s^2 + 4sc - 4d^2.
\end{equation*}

Note that as $c$ and $d$ range over $\mathbb{F}_q$, so too do $y_1$ and $y_2$ as $a$ and $b$ are both nonzero and $q \equiv 3 \mod 4$.  Since $s$ is nonzero and fixed, if we fix $d$, then the above quantity on the righthand side attains all values in $\mathbb{F}_q$ exactly once as $c$ ranges over $\mathbb{F}_q$.  We conclude that in this case, $|S_1| = q^2 (\frac{q-1}{2})$.


\bigskip 

This theorem  also shows that a distance-regular graph with intersection array $\{300, 294, 21, 1, 1, 21, 294, 300\}$ cannot be a double cover of $VO^-(4,7)$. This does not disprove the existence of such a graph, since there may be other strongly regular graphs with the parameters $(2401,300,5,42)$.

\bigskip

In some cases the correct set of bundles may be harder to describe, but the direct checking of the linear system in Theorem \ref{linsys} is more convenient. 
This is used in the following Corollary to rule out a hypothetical antipodal distance-regular graph whose existence was previously unknown (see \cite{BCN}). 

\begin{cor}\label{HS}

    There is no DRG-$\{22,21,3,1;1,3,21,22\}$.
    
\end{cor}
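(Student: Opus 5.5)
A DRG with intersection array $\{22,21,3,1;1,3,21,22\}$ is antipodal of diameter 4 with fibres of size 2. Its folded graph is therefore strongly regular with $k=22$, $\lambda = k-b_1-1 = 0$ and $\mu = 2b_2 = 6$. Counting vertices gives $v=1+22+\frac{22\cdot 21}{6}=100$. So the quotient is an SRG-$(100,22,0,6)$, and it is well known to be the Higman--Sims graph, which is unique with these parameters. Hence it suffices to show that the Higman--Sims graph $\Gamma$ has no 4-class double cover with $\lambda_1=\lambda_2=0$. The plan is to show that the system $S_1$ of Theorem \ref{linsys} is inconsistent for this $\Gamma$. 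Here $\mu=6\equiv 2 \pmod 4$, so each equation of $S_1$ reads $\varsigma(b)=1 \pmod 2$ for the bundle $b$ of a nonedge. Since $\lambda=0$, the system $S_2$ is vacuous and adds nothing.

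First I will check whether the global count of Corollary \ref{corDRG} already settles it. We have $v(v-k-1)=100\cdot 77=7700$, which is $0 \pmod 4$, so the number of nonedges is even and that corollary does not apply. Following Corollary \ref{cor1}, I instead need a set $W$ of nonedges with $|W|$ odd such that every edge lies in an even number of bundles from $B(W)$. Because $\mu=6\neq 2$ and $\lambda=0$, distinct nonedges have distinct bundles, so $|B(W)|=|W|$. Equivalently, $W$ must be a vector in the $\mathbb{F}_2$-kernel of the transposed incidence matrix (bundles versus edges) that is not orthogonal to the all-ones right-hand side. By standard linear algebra, such a vector exists exactly when $S_1$ is inconsistent.

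My first attempt at an explicit $W$ will use the structure of Higman--Sims. Take a vertex $x$; its neighbourhood is a coclique of size 22, the second subconstituent carries a $3$-$(22,6,1)$ design structure, and nonadjacent pairs correspond to blocks and points. Natural candidates for $W$ are the nonedges inside $\Gamma(x)$, the nonedges from $x$ to $\Gamma_2(x)$, or unions of these. For each candidate I would count how many bundles contain an edge of each orbit type under the stabiliser $M_{22}$. The parity of $|W|$ must also be checked: for example, the $\binom{22}{2}=231$ nonedges inside $\Gamma(x)$ form an odd number. Edge multiplicities would be computed orbitwise using the parameters $\lambda=0,\mu=6$ and the design intersection numbers.

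The main obstacle is that no small symmetric $W$ is guaranteed to work. If the orbitwise parity counts fail for the obvious candidates, I will fall back on direct computation. This means building the $3850\times 1100$ system $S_1$ over $\mathbb{F}_2$ (one row per nonedge, one column per edge) and row-reducing to show that the right-hand side $\mathbf{1}$ is not in its column space. This is a routine machine computation, and it matches the paper's remark that checking the linear system directly is more convenient here. Inconsistency of $S_1$ then contradicts Theorem \ref{linsys}, so no such DRG exists.
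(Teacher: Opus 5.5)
Your proposal is correct and follows the paper's proof: use the uniqueness of the Higman--Sims graph to reduce to a 4-class double cover with $\lambda_1=\lambda_2=0$, then verify by machine that the linear system of Theorem~\ref{linsys} is inconsistent (here that system is just $S_1$, since $S_2$ is vacuous). The symmetric candidates for $W$ you suggest do not work; for example, for the $231$ pairs inside $\Gamma(x)$, each edge through $x$ lies in $21$ of the bundles, an odd number. So, as in the paper, the argument genuinely rests on the computation.
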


\begin{proof}
    The scheme of a DRG-$\{22,21,3,1;1,3,21,22\}$ would be a 4-class double cover of the Higman-Sims graph, the unique $(100,22,0,6)$ strongly regular graph. By computation using Magma the linear system described in Theorem \ref{linsys} is inconsistent. 
    



\end{proof}

\bigskip
In the next two sections, we will give information on 4-class double covers up to 100 vertices. We will divide 4-class covers into two types, depending on whether $\lambda_1=\lambda_2$ or not. We will also compute the Krein parameters for each case. 

\section{ The case $\lambda_1=\lambda_2$ }

In this section, we assume we have a 4-class double cover of a primitive $(v,k,\lambda,\mu)$ SRG, where $\lambda_1 = \lambda_2$.  Thus $\lambda$ is even, and we find the multiplicities force $v$ to be even as well. The parameters are given in terms of $(v,k,\lambda,\mu)$, and the nontrivial eigenvalues of the strongly regular graph $r,s$ of multiplicities $f,g$, respectively. 

\bigskip

$L_1 = \begin{pmatrix}
0&k&0&0&0 \\
1&\frac{\lambda}{2}&k-\lambda-1&\frac{\lambda}{2}&0 \\
0&\frac{\mu}{2}&k-\mu&\frac{\mu}{2}&0 \\
0&\frac{\lambda}{2}&k-\lambda-1&\frac{\lambda}{2}&1 \\
0&0&0&k&0 \\
\end{pmatrix} \quad L_2 = \begin{pmatrix}
0&0&2(v-k-1)&0&0 \\
0&k-\lambda-1&2(v-2k+\lambda)&k-\lambda-1&0 \\
1&k-\mu&2(v-2k+\mu-2)&k-\mu&1 \\
0&k-\lambda-1&2(v-2k+\lambda)&k-\lambda-1&0 \\
0&0&2(v-k-1)&0&0 \\
\end{pmatrix}$

\bigskip

$L_3 = \begin{pmatrix}
0&0&0&k&0 \\
0&\frac{\lambda}{2}&k-\lambda-1&\frac{\lambda}{2}&1 \\
0&\frac{\mu}{2}&k-\mu&\frac{\mu}{2}&0 \\
1&\frac{\lambda}{2}&k-\lambda-1&\frac{\lambda}{2}&0 \\
0&k&0&0&0 \\
\end{pmatrix} \quad L_4 = \begin{pmatrix}
0&0&0&0&1 \\
0&0&0&1&0 \\
0&0&1&0&0 \\
0&1&0&0&0 \\
1&0&0&0&0 \\
\end{pmatrix}$

\bigskip

$P =\begin{pmatrix}
1&k&2(v-k-1)&k&1 \\
1&\sqrt{  k }&0&-\sqrt{  k }&-1 \\
1&r&-2(r+1)&r&1 \\
1&-\sqrt{  k }&0&\sqrt{  k }&-1 \\
1&s&-2(s+1)&s&1 \\
\end{pmatrix} \quad Q =\begin{pmatrix}
1&\frac{v}{2}&f&\frac{v}{2}&g \\
1&\frac{v}{2\sqrt{k}}&\frac{rf}{k}&\frac{-v}{2\sqrt{k}}&\frac{sg}{k} \\
1&0&\frac{-(r+1)f}{v-k-1}&0&\frac{-(s+1)g}{v-k-1} \\
1&\frac{-v}{2\sqrt{k}}&\frac{rf}{k}&\frac{v}{2\sqrt{k}}&\frac{sg}{k} \\
1&-\frac{v}{2}&f&-\frac{v}{2}&g \\
\end{pmatrix}$

\bigskip

$L_1^* = \begin{pmatrix}0&\frac{(k-r)(k-s)}{2(k + rs)}&0&0&0\\
1&0&\frac{-(k+r)(k-s)(s+1)}{2(k+rs)(r-s)}&0&\frac{(k-r)(k+s)(r+1)}{2(k+rs)(r-s)}\\
0&\frac{(k^2-r^2)(k-s)} {4k(k+rs)}&0&\frac{(k-r)^2(k-s)} {4k(k+rs)}&0\\
0&0&\frac{-(k-r)(k-s)(s+1)}{2(k+rs)(r-s)}&0&\frac{(k-r)(k-s)(r+1)}{2(k+rs)(r-s)}\\
0&\frac{(k-r)(k^2-s^2)} {4k(k+rs)}&0&\frac{(k-r)(k-s)^2} {4k(k+rs)}&0\\
\end{pmatrix}$

\bigskip

$L_2^* = \begin{pmatrix}0&0&\frac{-k(k-s)(s+1)}{(k+rs)(r-s)}&0&0\\
0&\frac{-(k+r)(k-s)(s+1)}{2(k+rs)(r-s)}&0&\frac{-(k-r)(k-s)(s+1)}{2(k+rs)(r-s)}&0\\
1&0&\frac{(k-s)(ks^2-kr + 2ks - 2r^2s - r^2 + rs^2)}{(k+rs)(r-s)^2}&0&\frac{-(k-r)^2s(r+1)}{(k+rs)(r-s)^2}\\
0&\frac{-(k-r)(k-s)(s+1)}{2(k+rs)(r-s)}&0&\frac{-(k+r)(k-s)(s+1)}{2(k+rs)(r-s)}&0\\
0&0&\frac{(k-r)(k-s)s(s+1)}{(k+rs)(r-s)^2}&0&\frac{-(k-s)^2r(s+1)}{(k+rs)(r-s)^2}\\
\end{pmatrix}$

$L_3^* = \begin{pmatrix}0&0&0&\frac{(k-r)(k-s)}{2(k + rs)}&0\\
0&0&\frac{-(k-r)(k-s)(s+1)}{2(k+rs)(r-s)}&0&\frac{(k-r)(k-s)(r+1)}{2(k+rs)(r-s)}\\
0&\frac{(k-r)^2(k-s)} {4k(k+rs)}&0&\frac{(k^2-r^2)(k-s)} {4k(k+rs)}&0\\
1&0&\frac{-(k+r)(k-s)(s+1)}{2(k+rs)(r-s)}&0&\frac{(k-r)(k+s)(r+1)}{2(k+rs)(r-s)}\\
0&\frac{(k-r)(k-s)^2} {4k(k+rs)}&0&\frac{(k-r)(k^2-s^2)} {4k(k+rs)}&0\\
\end{pmatrix}$

\bigskip

$L_4^* = \begin{pmatrix}0&0&0&0&\frac{k(k-r)(r+1)}{(k+rs)(r-s)}\\
0&\frac{(k-r)(k+s)(r+1)}{2(k+rs)(r-s)}&0&\frac{(k-r)(k-s)(r+1)}{2(k+rs)(r-s)}&0\\
0&0&\frac{-(k-r)^2s(r+1)}{(k+rs)(r-s)^2}&0&\frac{(k-r)(k-s)r(r+1)}{(k+rs)(r-s)^2}\\
0&\frac{(k-r)(k-s)(r+1)}{2(k+rs)(r-s)}&0&\frac{(k-r)(k+s)(r+1)}{2(k+rs)(r-s)}&0\\
1&0&\frac{-(k-s)^2r(s+1)}{(k+rs)(r-s)^2}&0&\frac{(k-r)(kr^2 + 2kr - ks + r^2s - 2rs^2 - s^2)}{(k+rs)(r-s)^2}\\
\end{pmatrix}$

\bigskip

Below we give a table of parameters for 4-class double covers of strongly regular graphs with at most 100 vertices and with $\lambda_1=\lambda_2$. The table denotes existence and nonexistence of the putative 4-class double cover by `+',`-', with a `?' for cases where existence unknown to us. 
This is to match the conventions of the strongly regular graph table in \cite{SRG}.

\bigskip

\begin{tabular}{c|c|c||c|c|c}

 & SRG & Notes & & SRG & Notes\\
 \hline
! & [16,5,0,2] & Wells Graph & ? & [64,28,12,12] &  \\
- & [16,10,6,6] & Theorem \ref{linsys}  & - & [64,35,18,20] & Corollary \ref{lamnbhd}\\
- & [16,6,2,2] & Theorem \ref{linsys} & ? & [66,20,10,4] &  \\
3! & [16,9,4,6] & SmallGroup(1152,157478) & ? & [66,45,28,36] &  \\
? & [28,12,6,4] & only T(8) feasible & - & [96,19,2,4] & Corollary \ref{lamnbhd}  \\
- & [28,15,6,10] & Corollary \ref{lamnbhd} & ? & [96,76,60,60] &  \\
- & [36,10,4,2] & Corollary \ref{grid}  & ? & [96,20,4,4] &  \\
? & [36,25,16,20] &  & - & [96,75,58,60] & Corollary \ref{lamnbhd}  \\
- & [36,14,4,6] & Theorem \ref{linsys}  & - & [96,35,10,14] & Corollary \ref{lamnbhd} \\
? & [36,21,12,12] &  & ? & [96,60,38,36] & SRG unknown \\
- & [36,15,6,6] & Corollary \ref{lamnbhd} & - & [100,18,8,2] & Corollary \ref{grid} \\
? & [36,20,10,12] & 3900 feasible & ? & [100,81,64,72] &  \\
- & [40,12,2,4] & Theorem \ref{linsys} & - & [100,22,0,6] & Corollary \ref{HS} \\
- & [40,27,18,18] & Corollary \ref{lamnbhd} & ? & [100,77,60,56] &  \\
- & [56,10,0,2] & BCN p. 278 & - & [100,27,10,6] & Corollary \ref{lamnbhd} \\
? & [56,45,36,36] &  & ? & [100,72,50,56] &  \\
- & [64,14,6,2] & Corollary \ref{grid} & ? & [100,33,8,12] & SRG unknown \\
? & [64,49,36,42] &  & ? & [100,66,44,42] & SRG unknown \\
- & [64,18,2,6] & Theorem \ref{linsys} & ? & [100,36,14,12] &  \\
? & [64,45,32,30] &  & - & [100,63,38,42] & Corollary \ref{lamnbhd} \\
? & [64,21,8,6] &  & ? & [100,44,18,20] &  \\
? & [64,42,26,30] & no rank 3 & - & [100,55,30,30] & Corollary \ref{lamnbhd} \\
- & [64,27,10,12] & Corollary \ref{lamnbhd} & ? & [100,45,20,20] &  \\
? & [64,36,20,20] &  & ? & [100,54,28,30] &  

\end{tabular}

\bigskip

Notes: Details on the Wells graph can be found in \cite{BCN}. The 3 non-isomorphic 4-class double covers of the SRG-$(16,9,4,6)$ and were found by computer, by brute force using the solutions of the associated linear system, noting that one may take all the edges of a spanning tree to be weighted 0. The resulting solution space has dimension 14, thus the computation is not demanding. Only one of these is Schurian, from a transitive action of an order 1152 group which is small group number 157478 in magma's database. It is not clear if this example generalizes. 

For the SRG $(28,12,6,4)$, there are exactly 4 nonisomorphic strongly regular graphs, however only $T(8)$ gives a consistent linear system. For the SRG-$(36,20,10,12)$, there are exactly 32548 nonisomorphic graphs, but only 3900 have consistent linear systems. The SRG-$(64,42,26,30)$ are not classified, however the examples from rank 3 groups did not satisfy Theorem \ref{linsys}. 

The non-existence of the putative 4-class double cover of SRG-$(56,10,0,2)$ (the Gerwitz graph) is mentioned in \cite{BCN} on page 278. 

\section{ The case $\lambda_1 \neq \lambda_2$ }\label{ne}

In this section we deal with the case that $\lambda_1 \neq \lambda_2$.  We claim that, in this case, the eigenvalues must be integral, hence we must have $(\lambda_1-\lambda_2)^2+4k = u^2$ a perfect square.  We summarize this in the theorem below:

\begin{thm}
    For a 4-class double cover of a $(v,k,\lambda,\mu)$-strongly regular graph with $\lambda_1 \neq \lambda_2$, we have $(\lambda_1-\lambda_2)^2+4k$ is a perfect square. 
\end{thm}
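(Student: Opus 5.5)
The plan is to use the integrality of multiplicities, or equivalently the rationality of eigenvalues when multiplicities differ. The eigenvalues of $A_1$ are $k$, $r$, $s$ (each with the multiplicities $1,f,g$ inherited from the SRG, on the eigenspaces $E_0,E_2,E_4$), together with
\[
\theta_\pm = \frac{(\lambda_1-\lambda_2)\pm\sqrt{(\lambda_1-\lambda_2)^2+4k}}{2}
\]
on $E_1$ and $E_3$. These two values come from the antisymmetric part: $A_1 - A_3$ commutes with $A_4$ and acts on the $-1$ eigenspace of $A_4$, which has dimension $v$. There, using $L_1$, we get $(A_1-A_3)^2 = 2k I + \ldots$. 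More directly, on that subspace $A_1$ satisfies $\theta^2 - (\lambda_1-\lambda_2)\theta - k = 0$, read off from the rows of $P$.

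First I would compute the multiplicities $m_1,m_3$ of $E_1,E_3$. Their sum is $v$, since the $-1$ eigenspace of $A_4$ has dimension $v$. The second constraint is $\operatorname{tr}A_1 = 0$, restricted to that subspace. Equivalently, the trace of $A_1$ on the $A_4=-1$ eigenspace is zero, because the trace of $A_1$ on the $A_4=+1$ eigenspace is $k+fr+gs=0$ by the SRG. So
\[
m_1\theta_+ + m_3\theta_- = 0, \qquad m_1+m_3=v.
\]
Writing $u=\sqrt{(\lambda_1-\lambda_2)^2+4k}$ and $\delta=\lambda_1-\lambda_2$, this gives
\[
(m_1+m_3)\frac{\delta}{2} + (m_1-m_3)\frac{u}{2} = 0,
\]
hence $(m_3-m_1)u = v\delta$.

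Next, since $\delta\neq 0$ and $v>0$, we get $m_3\neq m_1$, and therefore $u = v\delta/(m_3-m_1)$ is rational. Finally, $u^2=\delta^2+4k$ is an integer, and a rational square root of an integer is an integer, so $u^2$ is a perfect square.

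The only real obstacle is justifying the trace identity on the $A_4=-1$ eigenspace. For this I would use the Bose–Mesner algebra: $(I-A_4)/2$ is an idempotent in $\BM$, equal to $E_1+E_3$. Then
\[
\operatorname{tr}\bigl(A_1(I-A_4)/2\bigr) = \tfrac12\bigl(\operatorname{tr}A_1 - \operatorname{tr}A_1A_4\bigr) = 0,
\]
since $A_1A_4 = A_3$ has zero diagonal. This gives $m_1\theta_++m_3\theta_-=0$ without needing any SRG facts, so the argument is self-contained using the $P$-matrix stated above.
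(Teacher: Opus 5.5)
Your proof is correct and follows essentially the same route as the paper. Both arguments extract $m_1+m_3=v$ and $m_1\theta_+ + m_3\theta_- = 0$ from the vanishing traces encoded in the $P$-matrix, and both conclude from the integrality of $m_1,m_3$. The paper phrases the last step as a contradiction: if $\sqrt{(\lambda_1-\lambda_2)^2+4k}$ were irrational then $m_1=m_3$, forcing $\lambda_1=\lambda_2$. You instead argue directly that $u$ is rational and hence an integer. Your use of the idempotent $(I-A_4)/2=E_1+E_3$ together with $A_1A_4=A_3$ is a slightly cleaner way to isolate the $E_1,E_3$ contribution than the paper's step of deriving $k+rm_2+sm_4=0$ from the $A_2$ column.
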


\begin{proof}
    Let $1,m_1,m_2,m_3,m_4$ be the multiplicites of the scheme. From the $P$-matrix given in section 2.3, we have the system:

\bigskip

    $$1+m_1+m_2+m_3+m_4 = 2v$$

    $$k+\frac{( \lambda_1 - \lambda_2 )+\sqrt{ ( \lambda_1 - \lambda_2 )^2 +4k }}{2}m_1+rm_2+\frac{( \lambda_1 - \lambda_2 )-\sqrt{ ( \lambda_1 - \lambda_2 )^2 +4k }}{2} m_3 + sm_4 = 0$$

    $$-2(v-k-1)-2(r+1)m_2-2(s+1)m_4 = 0$$

    $$1-m_1+m_2-m_3+m_4 = 0$$    

\bigskip
    
    From the first and last equation we obtain $1+m_2+m_4=m_1+m_3=v$. The fact that $1+m_2+m_4=v$ together with equation 3 gives $k+rm_2+sm_4=0$.
    
    By way of contradiction, suppose $( \lambda_1 - \lambda_2 )^2 +4k$ is not a perfect square. Then the second equation forces  $m_1=m_3=\frac{v}{2}$. The second equation reduces to:     $k+( \lambda_1 - \lambda_2 )\frac{v}{2}+rm_2 + sm_4 = 0$, thus $( \lambda_1 - \lambda_2 )\frac{v}{2}= 0$. this forces $\lambda_1=\lambda_2$, contradicting our hypothesis.

\end{proof}

The parameters below are given in terms of $v,k,\lambda,\mu$, the nontrivial eigenvalues of the strongly regular graph $r,s$ of multiplicities $f,g$, the parameters $\lambda_1, \lambda_2,u$ and $l = \lambda_1-\lambda_2$.

\bigskip

$L_1 = \begin{pmatrix} 0&k&0&0&0 \\
1&\lambda_1&k-\lambda-1&\lambda_2&0 \\
0&\frac{\mu}{2}&k-\mu&\frac{\mu}{2}&0 \\
0&\lambda_2&k-\lambda-1&\lambda_1&1 \\
0&0&0&k&0 
\end{pmatrix} \quad L_2 = \begin{pmatrix}
0&0&2(v-k-1)&0&0 \\
0&k-\lambda-1&2(v-2k+\lambda)&k-\lambda-1&0 \\
1&k-\mu&2(v-2k+\mu-2)&k-\mu&1 \\
0&k-\lambda-1&2(v-2k+\lambda)&k-\lambda-1&0 \\
0&0&2(v-k-1)&0&0 \\
\end{pmatrix}$

\bigskip

$L_3 = \begin{pmatrix}
0&0&0&k&0 \\
0&\lambda_2&k-\lambda-1&\lambda_1&1 \\
0&\frac{\mu}{2}&k-\mu&\frac{\mu}{2}&0 \\
1&\lambda_1&k-\lambda-1&\lambda_2&0 \\
0&k&0&0&0 \\
\end{pmatrix} \quad L_4 = \begin{pmatrix}
0&0&0&0&1 \\
0&0&0&1&0 \\
0&0&1&0&0 \\
0&1&0&0&0 \\
1&0&0&0&0 \\
\end{pmatrix}$

\bigskip

$P =\begin{pmatrix}
1&k&2(v-k-1)&k&1 \\
1&\frac{u+l}{2}&0&\frac{-u-l}{2}&-1 \\
1&r&-2(r+1)&r&1 \\
1&\frac{-u+l}{2}&0&\frac{u-l}{2}&-1 \\
1&s&-2(s+1)&s&1 \\
\end{pmatrix} \quad Q =\begin{pmatrix}
1&\frac{(u-l)v}{2u}&f&\frac{(u+l)v}{2u}&g \\
1&\frac{v}{u}&\frac{-(k-s)r(s+1)}{(r-s)(k+rs)}&\frac{-v}{u}&\frac{(k-r)(r+1)s}{(r-s)(k+rs)} \\
1&0&\frac{-(k-s)}{r-s}&0&\frac{k-r}{r-s} \\
1&\frac{-v}{u}&\frac{-(k-s)r(s+1)}{(r-s)(k+rs)}&\frac{v}{u}&\frac{(k-r)(r+1)s}{(r-s)(k+rs)} \\
1&\frac{-(u-l)v}{2u}&\frac{-k(k-s)(s+1)}{(r-s)(k+rs)}&\frac{-(u+l)v}{2u}&\frac{k(k-r)(r+1)}{(r-s)(k+rs)} \\
\end{pmatrix}$

\bigskip

$L^*_1 = \begin{pmatrix}0&\frac{(u - l)(k-s)(k-r)}{2u(k+rs)}&0&0&0
\\1&0&\frac{-(s + 1)(u + l)(k-s)((u-l)^2 + 4r)}{8(r - s)u(k+rs)}&0&\frac{(r + 1)(u + l)(k-r)((u-l)^2 + 4s)}{8(r - s)u(k+rs)}
\\0&\frac{(k-s)(k-r)((u-l)^2 + 4r)}{4u^2(k+rs)}&0&\frac{(k-s)(k-r)^2}{u^2(k+rs)}&0
\\0&0&\frac{-(s + 1)(u - l)(k-s)(k-r)}{2(r - s)u(k+rs)}&0&\frac{(r + 1)(u - l)(k-s)(k-r)}{2(r - s)u(k+rs)}
\\0&\frac{(k-s)(k-r)((u-l)^2 + 4s)}{4u^2(k+rs)}&0&\frac{(k-s)^2(k-r)}{u^2(k+rs)}&0\\\end{pmatrix}$

\bigskip


$L^*_2 = \begin{pmatrix}0&0&\frac{-k(s + 1)(k-s)}{(r - s)(k+rs)}&0&0\\0&\frac{-(s + 1)(u + l)(k-s)((u-l)^2 + 4r)}{8(r - s)u(k+rs)}&0&\frac{-(s + 1)(u + l)(k-s)(k-r)}{2(r - s)u(k+rs)}&0\\1&0&\frac{(k-s)((k-r)(s^2+2s-r)-2r(s+1)(r-s))}{(r - s)^2(k+rs)}&0&\frac{-s(r + 1)(k-r)^2}{(r - s)^2(k+rs)}\\0&\frac{-(s + 1)(u - l)(k-s)(k-r)}{2(r - s)u(k+rs)}&0&\frac{-(s + 1)(u - l)(k-s)((u+l)^2 + 4r)}{8(r - s)u(k+rs)}&0\\0&0&\frac{s(s + 1)(k-s)(k-r)}{(r - s)^2(k+rs)}&0&\frac{-(s + 1)r(k-s)^2}{(r - s)^2(k+rs)}\\\end{pmatrix}$

\bigskip

$L^*_3 = \begin{pmatrix}0&0&0&\frac{(u + l)(k-s)(k-r)}{2u(k+rs)}&0\\
0&0&\frac{-(s + 1)(u + l)(k-s)(k-r)}{2(r - s)u(k+rs)}&0&\frac{(r + 1)(u + l)(k-s)(k-r)}{2(r - s)u(k+rs)}\\
0&\frac{(k-s)(k-r)^2}{u^2(k+rs)}&0&\frac{(k-s)(k-r)((u+l)^2 + 4r)}{4u^2(k+rs)}&0\\
1&0&\frac{-(s + 1)(u - l)(k-s)((u+l)^2 + 4r)}{8(r - s)u(k+rs)}&0&\frac{(r + 1)(u - l)(k-r)((u+l)^2 + 4s)}{8(r - s)u(k+rs)}\\
0&\frac{(k-s)^2(k-r)}{u^2(k+rs)}&0&\frac{(k-s)(k-r)((u+l)^2 + 4s)}{4u^2(k+rs)}&0\\\end{pmatrix}$

\bigskip

$L^*_4 = \begin{pmatrix}0&0&0&0&\frac{(r + 1)(u - l)(u + l)(k-r)}{4(r - s)(k+rs)}\\
0&\frac{(r + 1)(u + l)(k-r)((u-l)^2 + 4s)}{8(r - s)u(k+rs)}&0&\frac{(r + 1)(u + l)(k-s)(k-r)}{2(r - s)u(k+rs)}&0\\
0&0&\frac{-s(r + 1)(k-r)^2}{(r - s)^2(k+rs)}&0&\frac{r(r + 1)(k-s)(k-r)}{(r - s)^2(k+rs)}\\
0&\frac{(r + 1)(u - l)(k-s)(k-r)}{2(r - s)u(k+rs)}&0&\frac{(r + 1)(u - l)(k-r)((u+l)^2 + 4s)}{8(r - s)u(k+rs)}&0\\
1&0&\frac{-(s + 1)r(k-s)^2}{(r - s)^2(k+rs)}&0&\frac{(k-r)((k+s)(r^2+2r-s)-2rs(s+1))}{(r - s)^2(k+rs)}\\\end{pmatrix}$

\newpage

Below we give a table of parameters for 4-class double covers of strongly regular graphs with at most 100 vertices with $\lambda_1 \neq \lambda_2$. The table is similar to the previous one, except the additional parameters $\lambda_1,\lambda_2, l = \lambda_1-\lambda_2$ and $u$ are given. Since each 4-class double cover contains two double covers of the SRG $\Gamma$, namely $\hat{\Gamma}$ and $\check\Gamma$, we may interchange their roles obtain the same scheme with a different ordering on the relations, and with $\lambda_1$ and $\lambda_2$ swapped. Therefore without loss of generality, we always take $\lambda_1 \geq \lambda_2$, which we do in the table below.

\begin{tabular}{c|c|c|c}

 & SRG & $\lambda_1,\lambda_2,l,u]$ & Notes  \\
\hline
+  & [10, 6, 3, 4]  & [ 2, 1, 1, 5 ] & 	Section 2.4 \\
+  & [15, 8, 4, 4]  & [ 3, 1, 2, 6 ] & 	Section 2.4 \\
-  & [21, 10, 3, 6]  & [ 3, 0, 3, 7 ] & 	Theorem \ref{linsys} \\
+  & [21, 10, 5, 4]  & [ 4, 1, 3, 7 ] & 	Section 2.4 \\
+  & [28, 12, 6, 4]  & [ 5, 1, 4, 8 ] & 	Section 2.4 \\
!  & [35, 16, 6, 8]  & [ 6, 0, 6, 10 ] & 	$J(8,4)$ \\
+  & [36, 14, 7, 4]  & [ 6, 1, 5, 9 ] & 	Section 2.4 \\
+  & [36, 20, 10, 12]  & [ 9, 1, 8, 12 ] & 	$Q$-polynomial \\
?  & [40, 27, 18, 18]  & [ 12, 6, 6, 12 ] & 	all feasible \\
?  & [45, 32, 22, 24]  & [ 13, 9, 4, 12 ] & 	77 feasible \\
+  & [45, 16, 8, 4]  & [ 7, 1, 6, 10 ] & 	Section 2.4 \\
-  & [49, 12, 5, 2]  & [ 3, 2, 1, 7 ] & 	Cor \ref{grid} \\
+  & [55, 18, 9, 4]  & [ 8, 1, 7, 11 ] & 	Section 2.4 \\
+  & [55, 36, 21, 28]  & [ 15, 6, 9, 15 ] & 	From $M_{11}$ \\
?  & [56, 45, 36, 36]  & [ 20, 16, 4, 14 ] & 	Gerwitz, feasible  \\
?  & [57, 32, 16, 20]  & [ 10, 6, 4, 12 ] & 	 \\
?  & [63, 32, 16, 16]  & [ 10, 6, 4, 12 ] & 	 \\
+  & [63, 32, 16, 16]  & [ 15, 1, 14, 18 ] & 	$Q$-polynomial \\
?  & [64, 27, 10, 12]  & [ 8, 2, 6, 12 ] & 	 \\
!  & [64, 28, 12, 12]  & [ 12, 0, 12, 16 ] & 	halved cube \\
+  & [66, 20, 10, 4]  & [ 9, 1, 8, 12 ] & 	Section 2.4 \\
+  & [66, 45, 28, 36]  & [ 20, 8, 12, 18 ] & 	$Q$-polynomial  \\
+  & [78, 22, 11, 4]  & [ 10, 1, 9, 13 ] & 	Section 2.4 \\
-  & [81, 20, 1, 6]  & [ 1, 0, 1, 9 ] & 	Theorem \ref{Patv2} \\
?  & [81, 50, 31, 30]  & [ 18, 13, 5, 15 ] & 	 \\
?  & [85, 64, 48, 48]  & [ 30, 18, 12, 20 ] & 	 \\
?  & [85, 54, 33, 36]  & [ 18, 15, 3, 15 ] & 	SRG unknown \\
+  & [91, 24, 12, 4]  & [ 11, 1, 10, 14 ] & 	Section 2.4 \\
?  & [96, 75, 58, 60]  & [ 34, 24, 10, 20 ] & 	 \\
?  & [96, 35, 10, 14]  & [ 6, 4, 2, 12 ] & 	SRG unknown \\
?  & [96, 60, 38, 36]  & [ 21, 17, 4, 16 ] & 	SRG unknown \\
?  & [99, 56, 28, 36]  & [ 19, 9, 10, 18 ] & 	SRG unknown \\
?  & [100, 27, 10, 6]  & [ 8, 2, 6, 12 ] &  \\
-  & [100, 66, 41, 48]  & [ 30, 11, 19, 25 ] & 	$Q$-polynomial, \cite{GV}

\end{tabular}

\bigskip

Notes: In the cases where the strongly regular graphs are classified, all were checked by Theorem \ref{linsys} and the number that satisfy the theorem are listed. An example of a 4-class double cover of the unique SRG-$(55, 36, 21, 28)$ is given by the group $C_2 \times M_{11}$ acting on the cosets of a subgroup of $M_{11}$ of order 72, contained in the stabilizer of a point. The schemes that are $Q$-polynomial are previously known, see \cite{MMW}.

\section{Open Problems}

We give some open problems below.  

\begin{itemize}

    \item Complete the tables in this paper for all cases where the strongly regular graphs have been classified. 

    \item Classify the rank three strongly regular graphs that have 4-class double covers, and give the values of $\lambda_1,\lambda_2$. 

    \item Find the parameters of the nontrivial association schemes generated by double covers of the classical distance-regular graph families of unbounded diameter.
    
\end{itemize}

\section{Acknowledgements}

The programming languages Magma (see \cite{Magma}) and GAP \cite{GAP4} were used to gather data and compute the solution space for examples of the linear systems described in Section 3. The research of J. Williford was supported in part by the Simons Foundation Collaboration Grant 711898. 



\end{document}